\newtheorem{theorem}{Theorem}[section]
\newtheorem{lemma}[theorem]{Lemma}
\newtheorem{proposition}[theorem]{Proposition}
\newtheorem{problem}[theorem]{Problem}
\newtheorem{corollary}[theorem]{Corollary}
\theoremstyle{definition}
\newtheorem{example}[theorem]{Example}
\theoremstyle{remark}
\newtheorem{remark}[theorem]{Remark}
\numberwithin{equation}{section}
\newcommand{\R}{\mathbb{R}}
\DeclareMathOperator{\supp}{supp}
\newcommand{\nothing}[1]{}
\newcounter{smallromansdash}
\newcounter{bigromans} 
  {\end{list}}
\begin{document}

\title[Complemented copies of $c_0$ in spaces $C_p(X,E)$]{On complemented copies of the space $c_0$ in spaces $C_p(X,E)$}
\author[C. Bargetz]{Christian Bargetz}
\address{Universität Innsbruck, Department of Mathematics, Innsbruck, Austria.}
\email{christian.bargetz@uibk.ac.at}
\author[J. K\k{a}kol]{Jerzy K\k{a}kol}
\address{Faculty of Mathematics and Computer Science, A. Mickiewicz University, Pozna\'n, Poland, and Institute of Mathematics, Czech Academy of Sciences, Prague, Czech Republic.}
\email{kakol@amu.edu.pl}
\author[D.\ Sobota]{Damian Sobota}
\address{Kurt G\"odel Research Center, Department of Mathematics, Vienna University, Vienna, Austria.}
\email{damian.sobota@univie.ac.at}

\begin{abstract}
  We study the question for which Tychonoff spaces $X$ and locally convex spaces $E$ the space $C_p(X,E)$ of continuous $E$-valued functions on $X$  contains a complemented copy of the space $(c_0)_p=\{x\in\mathbb{R}^\omega\colon x(n)\to0\}$, both endowed with the pointwise topology. We provide a positive answer for a vast class of spaces, extending classical theorems of Cembranos, Freniche, and Doma\'nski and Drewnowski, proved for the case of Banach and Fr\'echet spaces $C_k(X,E)$. Also, for given infinite Tychonoff spaces $X$ and $Y$, we show that $C_p(X,C_p(Y))$ contains a complemented copy of $(c_0)_p$ if and only if any of the spaces $C_p(X)$ and $C_p(Y)$ contains such a subspace.
\end{abstract}

\subjclass[2020]{Primary: 46E40, 46E10. Secondary: 46A03, 46A08.}
\keywords{Josefson--Nissenzweig theorem, complementability of $c_0$, locally convex spaces, vector-valued functions, separately continuous functions.}

\maketitle

\section{Introduction}
For   a topological space $X$ and a locally convex space $E$, by  $C_p(X,E)$ and $C_k(X,E)$ we denote the space   of all continuous $E$-valued functions on $X$ endowed with the pointwise topology (i.e. the topology inherited from the product $E^X$) and the compact-open topology, respectively. Similarly, if $X$ is a compact space and $E$ is a Banach space, by $C(X,E)$ we mean the Banach space of all continuous $E$-valued functions on $X$ endowed with the sup-norm topology.
If $E=\mathbb{R}$, then we simply write $C(X)$, $C_p(X)$ and $C_k(X)$ instead of $C(X,\mathbb{R})$, $C_p(X,\mathbb{R})$ and $C_k(X,\mathbb{R})$, respectively.

Recall the following result due to Cembranos (\cite{Cem}) and Freniche (\cite[Corollary 2.5]{Fre84}).

\begin{theorem}[Cembranos, Freniche]\label{Cembranos}
  For an infinite compact space $X$ and  an infinite-dimensional Banach space $E$, the Banach space $C(X,E)$  contains a complemented copy of the Banach space $c_{0}$.
\end{theorem}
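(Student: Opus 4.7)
The plan is to exhibit an isometric embedding $T : c_0 \hookrightarrow C(X,E)$ together with a bounded linear operator $P : C(X,E) \to c_0$ satisfying $P \circ T = \mathrm{id}_{c_0}$; then $T \circ P$ is the desired bounded projection of $C(X,E)$ onto a copy of $c_0$.

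First I would gather the required ingredients from the two sides. Since $X$ is infinite compact Hausdorff, a standard argument yields a sequence of distinct points $(x_n)$ with pairwise disjoint open neighborhoods $U_n \ni x_n$; Urysohn's lemma then produces functions $f_n \in C(X,[0,1])$ with $f_n(x_n) = 1$ and $f_n \equiv 0$ outside $U_n$, so that $f_n f_m \equiv 0$ whenever $n \neq m$ and $f_n(x_m) = \delta_{nm}$. On the Banach-space side, the \emph{Josefson--Nissenzweig theorem} applied to the infinite-dimensional space $E$ provides a weak$^*$-null sequence $(e_n^*) \subset E^*$ with $\|e_n^*\| = 1$; I then pick $e_n \in E$ with $\|e_n\| = 1$ and $|e_n^*(e_n)| > 1/2$.

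I would then set
\[
T\big((a_n)\big) \;=\; \sum_{n\in\N} a_n\, f_n\, e_n, \qquad P(g)_n \;=\; \frac{e_n^*\big(g(x_n)\big)}{e_n^*(e_n)},
\]
for $(a_n) \in c_0$ and $g \in C(X,E)$. The disjointness $f_n f_m \equiv 0$ forces uniform convergence of the series defining $T(a)$, while $\|e_n\| = f_n(x_n) = 1$ gives $\|T(a)\|_\infty = \|a\|_\infty$. Boundedness of $P$ into $\ell^\infty$ with $\|P\| \leq 2$ is immediate from $|e_n^*(e_n)| > 1/2$ and $\|e_n^*\| = 1$, and the identities $f_n(x_m) = \delta_{nm}$ directly yield $P \circ T = \mathrm{id}$.

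The crux, and the step I expect to be the main obstacle, is verifying that $P$ actually takes values in $c_0$; equivalently, that $e_n^*(g(x_n)) \to 0$ for every $g \in C(X,E)$. The key observation is that $\{g(x_n) : n \in \N\}$ is relatively norm-compact, being contained in the compact image $g(X) \subset E$. A routine $\varepsilon$-net argument then shows that any weak$^*$-null bounded sequence in $E^*$ converges to zero \emph{uniformly} on norm-compact subsets of $E$; hence $e_n^*(g(x_n)) \to 0$, as required. It is precisely at this stage that mere biorthogonal functionals obtained from a basic sequence in $E$ would be insufficient (as one sees, e.g., by taking $E = \ell^\infty$), and the Josefson--Nissenzweig theorem --- one of the motifs underlying the whole paper --- becomes indispensable.
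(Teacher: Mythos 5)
Your proof is correct, and it is in essence the classical Josefson--Nissenzweig-based argument of Cembranos: disjointly supported bump functions $f_n$ peaking at a discrete sequence $x_n$, a normalized weak$^*$-null sequence $e_n^*$ with almost-biorthogonal vectors $e_n$, the embedding $T$ and the pairing map $P$, with the crucial point (that $P$ lands in $c_0$) settled by uniform convergence of bounded weak$^*$-null sequences on the norm-compact set $g(X)$. The paper never proves Theorem~\ref{Cembranos} in this direct form: it quotes it and instead recovers it (Corollary~\ref{first}) from the more general Theorem~\ref{the1}, where the very same $J$/$P$ construction is carried out for $C_p(X,E)$ with $E$ barrelled and having the JNP, yielding a complemented copy of $(c_0)_p$ in the pointwise topology; the norm-topology statement is then obtained a posteriori by a Closed Graph Theorem argument identifying the pointwise and sup-norm topologies on the copy. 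So the underlying construction is the same, but your route is more direct and more quantitative for this particular statement: working in the sup-norm from the start you get $T$ isometric and $\|P\|\le 2$, with no closed-graph upgrade needed; the paper's detour buys generality (barrelled locally convex $E$ with the JNP, $X$ merely Tychonoff with an infinite compact subset, and the $(c_0)_p$-conclusion), at the cost of needing equicontinuity of the weak$^*$-null sequence (via barrelledness) where you can simply use norm-boundedness and an $\varepsilon$-net.
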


Theorem \ref{Cembranos} was extended by Doma\'nski and Drewnowski (\cite[Theorem~1, p.~260]{DD}) to Fr\'echet spaces, i.e. complete and metrisable locally convex spaces, of the form $C_{k}(X, E)$.

\begin{theorem}[Doma\'nski--Drewnowski]\label{dom-drew}
Let $X$ be a Tychonoff space which contains an infinite compact subset and $E$ a Fr\'echet space which is not Montel. Then the Fr\'echet space $C_{k}(X,E)$ contains a complemented copy of the space $c_{0}$.
\end{theorem}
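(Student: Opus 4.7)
The plan is to mimic Cembranos's Banach-space argument (Theorem~\ref{Cembranos}) in the Fr\'echet setting: produce a sequence of tensor-type elements $T_n = f_n \otimes e_n$ in $C_k(X,E)$ whose closed linear span is isomorphic to $c_0$, together with a continuous projection onto it. The $f_n$'s will come from a standard construction exploiting the infinite compact subset of $X$; the vectors $e_n$ together with the coordinate functionals $\varphi_n$ of the projection come from applying the Josefson--Nissenzweig theorem to a Banach quotient of $E$ which is infinite-dimensional precisely because $E$ is non-Montel.

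On the $X$-side, inside the given infinite compact set $K \sub X$ I would pick distinct points $x_n \in K$ and pairwise disjoint open neighborhoods $V_n \sub K$ of $x_n$ (any infinite Hausdorff space contains an infinite discrete collection). Passing to $\beta X$, which is compact Hausdorff and hence normal, one separates the $x_n$'s by pairwise disjoint open sets of $\beta X$, whose intersections with $X$, together with complete regularity, yield continuous $f_n \colon X \to [0,1]$ with $f_n(x_n) = 1$ and pairwise disjoint cozero sets in $X$. On the $E$-side, the non-Montel hypothesis is used in two ways. First, there is a continuous seminorm $p$ on $E$ whose associated Banach quotient $\widetilde{E}_p$ is infinite-dimensional (otherwise every bounded subset of $E$ would be relatively compact by a diagonal argument through the defining seminorms). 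The Josefson--Nissenzweig theorem applied in $\widetilde{E}_p$ then yields a sequence $(\psi_n) \sub \widetilde{E}_p^{\,*}$ of unit-norm functionals with $\psi_n \to 0$ in the weak$^{*}$ topology; pulling back along the canonical map $q_p \colon E \to \widetilde{E}_p$ provides an equicontinuous family $(\varphi_n) \sub E'$ satisfying $|\varphi_n(\cdot)| \leq p(\cdot)$ and $\varphi_n \to 0$ pointwise on $E$. Second, non-Montelness furnishes a bounded non-precompact sequence in $E$, from which the Bessaga--Pe\l czy\'nski selection procedure extracts a basic sub-sequence $(e_n) \sub E$ still bounded in $E$; a further refinement arranges the biorthogonality $\varphi_n(e_m) = \delta_{nm}$ together with $\inf_n \varphi_n(e_n) > 0$.

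Define $T_n(x) := f_n(x)\, e_n$. The disjoint cozero condition turns each series $\sum_n a_n T_n$ into a pointwise finite sum (at most one nonzero term at each $x \in X$), so $T(a) := \sum_n a_n T_n$ is a well-defined continuous $E$-valued function on $X$ for every $a \in c_0$. The estimate $\sup_{x \in L} q(T(a)(x)) \leq \|a\|_\infty \sup_n q(e_n)$, valid for every continuous seminorm $q$ on $E$ and every compact $L \sub X$, shows that $T \colon c_0 \to C_k(X,E)$ is continuous, and the identity $\varphi_n(T(a)(x_n)) = a_n$ shows that $T$ is an isomorphism onto its image. Setting $\pi \colon C_k(X,E) \to c_0$ by $\pi(g) := \bigl(\varphi_n(g(x_n))\bigr)_n$, biorthogonality gives $\pi \circ T = \Id_{c_0}$, so $T \circ \pi$ is the candidate projection onto $T(c_0) \cong c_0$.

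The main obstacle is verifying that $\pi$ actually lands in $c_0$ and is continuous. Continuity is immediate from the bound $|\varphi_n(g(x_n))| \leq p(g(x_n)) \leq \sup_{x \in K} p(g(x))$, whose right-hand side is one of the defining seminorms of $C_k(X,E)$. The membership $\pi(g) \in c_0$, i.e.\ $\varphi_n(g(x_n)) \to 0$, is the critical step where the Josefson--Nissenzweig input is essential: the image $g(K) \sub E$ is compact, the family $(\varphi_n)$ is equicontinuous and pointwise null on $E$, and hence by the Arzel\`a--Ascoli principle it converges to $0$ uniformly on $g(K)$, giving $\varphi_n(g(x_n)) \to 0$. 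With both properties in hand, $T \circ \pi$ is the required continuous projection of $C_k(X,E)$ onto the complemented copy $T(c_0) \cong c_0$, completing the proof.
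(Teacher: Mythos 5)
Your overall architecture is exactly the Doma\'nski--Drewnowski scheme that the paper reproduces (for the $C_p$ case) in the proof of Theorem~\ref{the1}: disjointly supported $f_n$ with $f_n(x_n)=1$ on an infinite compact set, an equicontinuous weak$^*$-null sequence $(\varphi_n)\subset E'$ paired with a bounded sequence $(e_n)\subset E$ on which it stays large, the embedding $T$, the coordinate map $\pi(g)=(\varphi_n(g(x_n)))_n$, and the equicontinuity-plus-compactness-of-$g(K)$ argument to see that $\pi$ lands in $c_0$. Those parts are fine (note that full biorthogonality $\varphi_n(e_m)=\delta_{nm}$ is not needed: the disjoint supports of the $f_n$ already decouple the coordinates, so $\varphi_n(e_n)=1$ suffices, exactly as in the paper).

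The genuine gap is in how you produce the pair $((\varphi_n),(e_n))$ from non-Montelness. Applying the classical Josefson--Nissenzweig theorem in an infinite-dimensional local Banach space $\widetilde{E}_p$ does give norm-one weak$^*$-null $\psi_n$, and the pullbacks $\varphi_n=\psi_n\circ q_p$ are equicontinuous and pointwise null on $E$; but the vectors witnessing $\|\psi_n\|=1$ live in $\widetilde{E}_p$, and lifting them to $E$ only controls the single seminorm $p$, not boundedness in $E$, which requires control of \emph{all} seminorms. Nothing in your argument produces a bounded set $B\subset E$ with $\sup_{x\in B}|\varphi_n(x)|\not\to 0$; the sentence invoking a bounded non-precompact sequence, Bessaga--Pe\l czy\'nski selection and a ``further refinement'' to biorthogonality is unsubstantiated, since there is no link between that bounded sequence and the particular functionals $\varphi_n$ you constructed. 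Moreover the step cannot be repaired from your two ingredients alone: there are Montel, non-Schwartz Fr\'echet spaces (e.g.\ suitable Montel K\"othe echelon spaces) whose local Banach spaces are infinite-dimensional, and in a Montel space every equicontinuous weak$^*$-null sequence converges uniformly on bounded sets, so for such spaces no bounded $(e_n)$ with $\inf_n|\varphi_n(e_n)|>0$ can exist for your pullbacks---showing that ``some $\widetilde{E}_p$ is infinite-dimensional'' plus classical JN is strictly weaker than what the construction needs. The missing tool is a Josefson--Nissenzweig-type theorem for Fr\'echet spaces: by Bonet--Lindstr\"om--Valdivia \cite{BLV} a Fr\'echet space is non-Montel if and only if it has the JNP, i.e.\ there is a weak$^*$-null sequence in $E'$ that fails to converge uniformly on some bounded subset of $E$ itself; this (together with barrelledness, which converts non-$\beta^*$ convergence into the existence of the bounded sequence $(x_n)$ with $\langle x_n^*,x_n\rangle=1$) is precisely the input used in \cite{DD} and in the paper's Theorem~\ref{the1}, from which the $C_k$ statement is then recovered as in Corollaries~\ref{variant_dd} and~\ref{first}.
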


Both of the theorems have important consequences. E.g., since for each Tychonoff space $X$ and locally compact space $Y$, the spaces $C_{k}(X\times Y)$ and $C_{k}(X,C_{k}(Y))$ are linearly homeomorphic (in short, \textit{isomorphic}), see \cite[Corollary 2.5.7]{McCoy}, Theorem \ref{Cembranos} yields that for infinite compact spaces $X$ and $Y$ the Banach space $C(X\times Y)$ contains a complemented copy of $c _{0}$. Similarly, if $X$ is a Tychonoff space containing an infinite compact subset and $Y$ is a locally compact and $\sigma$-compact Tychonoff space, then $C_{k}(Y)$ is an infinite-dimensional Fr\'echet space which is not a Montel space (\cite[Theorem 11.8.7]{Jarchow}), and hence Theorem \ref{dom-drew} implies that $C_{k}(X\times Y)\approx C_{k}(X,C_{k}(Y))$ contains a complemented copy of $c_{0}$.

Let the space $(c_0)_p=\{x\in\mathbb{R}^\omega\colon\ x(n)\to 0\}$ be endowed with the pointwise topology inherited from $\mathbb{R}^\omega$. The present paper deals with the following question, motivated by Theorems \ref{Cembranos} and \ref{dom-drew}: ($*$) \emph{When does a given space $C_{p}(X,E)$ contain a complemented copy of $(c_{0})_p$?}

We first answer question ($*$) in the case of spaces of the form $C_p(X,C_p(Y))$---in Theorem \ref{thm:cpxcpy_cps_compl} we show that $C_p(X,C_p(Y))$ contains a complemented copy of $(c_0)_p$ if and only if any of the spaces $C_p(X)$ and $C_p(Y)$ contains such a copy. For characterisations and examples of classes of Tychonoff spaces $X$ for which $C_p(X)$ contains a complemented copy of $(c_0)_p$, see e.g. \cite{BKS}, \cite{KMSZ}, \cite{KSZproc}, \cite{KSZfm}. 

Regarding a more general case, we show in Theorem \ref{the1} that for every Tychonoff space $X$ containing an infinite compact subset and barrelled locally convex space $E$ with the Josefson--Nissenzweig property (see Section \ref{sec:jnp} for the definition) the space $C_{p}(X, E)$ contains a complemented copy of $(c_{0})_{p}$. This not only generalises Theorem \ref{Cembranos} (see Corollary \ref{first}) and provides a $C_p$-variant of Theorem \ref{dom-drew} (Corollary \ref{variant_dd}), but also has several applications. For example, we get that if $X$ is an infinite Tychonoff space and $Y$ is an infinite compact space, then the space $C_{p}(X,C(Y))$ contains 
a complemented copy of $\mathbb{R}^{\omega}$ or a complemented copy of $(c_{0})_{p}$ (Corollary \ref{mon}; see also Corollary \ref{second_cor} for a $C_k(X,C_k(Y))$-version). We immediately get from this that the space  $C_{p}(X,C(Y))$ is isomorphic to  $C_{p}(X,C(Y))\times\mathbb{R}$ (Corollary \ref{second}), which extends the number of known cases of locally convex spaces $E$ for which $E\approx E\times\mathbb{R}$, see the remark following Corollary \ref{second} and concerning a problem of Arkhangel'ski.

For  the case of spaces of the form $C_{p}(X,E_{w})$, where $E_{w}$ means a locally convex spaces $E$ with its weak topology, we show in Theorem \ref{the1-1} that, for every Tychonoff space $X$ containing an infinite compact subset and locally convex space $E$ containing a copy of the Banach space $\ell_1$, the space $C_p(X,E_w)$ contains a complemented copy of $(c_0)_p$. Hence, we get that if  $X$ is  a Tychonoff space which contains an infinite compact subset and  $Y$ is  an infinite Tychonoff space containing a non-scattered compact subset, then also  $C_{p}(X,C_{k}(Y)_{w})$ contains a complemented copy of $(c_{0})_{p}$, see  Corollary \ref{sca}. This applies to show that, e.g., there  is no continuous linear surjection  from  $C_{p}(\beta\omega, C_{p}(\beta\omega))$ onto $C_{p}(\beta\omega, C(\beta\omega)_{w})$, as well as that for every infinite compact space $X$ the spaces $C_p(X)$ and $C(X)_w$ are not isomorphic (which is a special case of \cite[Corollary 3.2]{KM}, cf. Remark \ref{rem:km_cp}).

\section{Preliminaries}

The cardinality of a set $X$ is denoted by $|X|$. By $\omega$ we denote the cardinality of the space of natural numbers $\mathbb{N}$ and, as usual, we identify $\omega$ with $\mathbb{N}$. $\beta\omega$ denotes the \v{C}ech--Stone compactification of $\omega$, and $\omega^*=\beta\omega\setminus\omega$.

We assume that all topological spaces we consider are \textbf{Tychonoff}, that is, completely regular and Hausdorff. If $(X,\tau)$ is a topological space (with the topology $\tau$) and $Y\subseteq X$, then by $\tau\restriction Y$ we mean the restriction of $\tau$ on $Y$, that is, $\tau\restriction Y=\{U\cap Y\colon U\in\tau\}$.

For a locally convex space $E$,  $E'$ denotes its topological dual and $E_w$ denotes the pair $(E,\sigma(E,E'))$), i.e. the space $E$ endowed with its weak topology. 

If $X$ is a Tychonoff space, then $C_p(X)=(C_p(X))_w$ holds. We write $L_p(X)=C_p(X)'$.  Recall that each $\varphi\in L_p(X)$ may be uniquely represented as a finite linear combination of point-measures on $X$, i.e. $\varphi=\sum_{x\in X}\alpha_x^\varphi\delta_x$, where $\alpha_x^\varphi\in\mathbb{R}$ and  $\alpha_x^\varphi\neq 0$ only for finitely many $x\in X$. If $A\subseteq X$, then we set $\varphi\restriction A=\sum_{x\in A}\alpha_x^\varphi\delta_x$. We put $\supp(\varphi)=\{x\in X\colon\ \alpha_x^\varphi\neq0\}$, and define \textit{the norm} $\|\varphi\|$ of $\varphi$ by the formula $\|\varphi\|=\sum_{x\in\supp(\varphi)}|\alpha_x^\varphi|$.

Note that if $\mathcal{P}$ is a family of semi-norms on a locally convex space $E$ inducing the topology of $E$, then the pointwise topology of $C_p(X,E)$ is induced by the family $\mathcal{Q}$ of semi-norms given for every $f\in C_p(X,E)$ by the formula: $q_{x_1,\ldots,x_n;p}(f)=\max_{i=1,\ldots,n}p(f(x_i))$, where $p\in\mathcal{P}$ and $x_1,\ldots,x_k\in X$; see \cite[Section I.2]{Schmets}. In particular, if $E=C_p(Y)$ for Tychonoff $Y$, then the topology on $E$ is induced by the semi-norms: $p_{y_1,\ldots,y_m}(f)=\max_{i=1,\ldots,m}|f(y_i)|$, with $y_1,\ldots,y_m\in Y$, whence the topology on $C_p(X,E)$ is given by the semi-norms of the form:
\[q_{x_1,\ldots,x_n;y_1,\ldots,y_m}(f)=\max_{i=1,\ldots,n}\max_{j=1\ldots,m}|f(x_i)(y_j)|,\]
where $x_1,\ldots,x_n\in X$ and $y_1,\ldots,y_m\in Y$.

If $E$ and $F$ are both locally convex,  we write $E\approx F$ if $E$ and $F$ are \textit{isomorphic} (i.e. linearly homeomorphic). We also say that $E$ \textit{contains a complemented copy of} $F$ (or that $F$ \textit{is complemented in} $E$) if there are closed linear subspaces $E_1$ and $E_2$ of $E$ such that $E$ is a direct algebraic sum of $E_1$ and $E_2$ (i.e. $E=E_1\oplus E_2$ as a vector space and $E_1\cap E_2 =\{0\}$), $E_1$
is isomorphic to $F$, and the natural projection from $E$ onto $E_1$ is continuous.

By $S$ we denote the non-trivial convergent sequence, i.e. the space $S=\{0\}\cup\{1/n\colon n\in\omega, n>0\}$ with the usual topology inherited from the real line. By $c_0$ we mean the usual Banach space of all real-valued sequences convergent to $0$, i.e. $c_0=\{x\in\mathbb{R}^\omega\colon x(n)\to0\}$ endowed with the supremum norm. The symbol $(c_0)_p$ denotes the space $c_0$  with the pointwise topology inherited from the product $\mathbb{R}^\omega$. Note that $(c_0)_p\approx C_p(S)$ (see \cite[p.397]{vM2}).

\section{$C_p(X\times Y)$ versus $C_p(X,C_p(Y))$\label{sec:cpxy_vs_cpxcpy}}

In this section we will briefly recall standard and folklore facts concerning spaces of separately continuous functions on products $X\times Y$ of two Tychonoff spaces and their relations to the spaces $C_p(X,C_p(Y))$, which will be useful in the next sections.

Fix Tychonoff spaces $X$ and $Y$. Recall that a function $f\colon X\times Y\to\mathbb{R}$ is \textit{separately continuous} if the functions $Y\ni y\mapsto f(x_0,y)$ and $X\ni x\mapsto f(x,y_0)$ are continuous for every $x_0\in X$ and $y_0\in Y$. Of course, every continuous function is separately continuous, but the converse may not hold (e.g. for every infinite compact spaces $X$ and $Y$ there is a separately continuous non-continuous $f\colon X\times Y\to\R$). By $SC(X\times Y)$ we denote the space of all real-valued separately continuous functions on $X\times Y$ and by $SC_p(X\times Y)$ we mean $SC(X\times Y)$ endowed with the pointwise topology. Recall that $C(X\times Y)$ is a linear subspace of $SC(X\times Y)$ and that $C_p(X\times Y)$ is dense in $SC_p(X\times Y)$.

Let the mapping $A\colon SC_p(X\times Y) \to C_p(X, C_p(Y))$ by defined as follows: for every $f\in SC_p(X\times Y)$, $A(f)$ is the unique mapping in $C_p(X,C_p(Y))$ with $A(f)(x)(y)=f(x,y)$ for every $x\in X$ and $y\in Y$, that is,
\[SC_p(X\times Y)\ni f\substack{A\\\longmapsto\\~}\big[x\mapsto [y\mapsto f(x,y)]\big]\in C_p(X,C_p(Y)).\]
The following theorem is folklore, cf. e.g. \cite[Lemma 1.2]{GS00}.

\begin{theorem}\label{thm:sepcont_iso}
  The mapping $A$ is a well-defined isomorphism between the spaces $SC_p(X\times Y)$ and $C_p(X,C_p(Y))$.
\end{theorem}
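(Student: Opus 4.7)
The plan is to verify three formal claims implicit in the statement: that $A$ maps $SC_p(X\times Y)$ into $C_p(X,C_p(Y))$, that $A$ is a linear bijection, and that both $A$ and its inverse are continuous. First I would fix $f\in SC(X\times Y)$ and observe that for each $x\in X$ the section $y\mapsto f(x,y)$ is continuous by separate continuity, so $A(f)(x)\in C_p(Y)$. To check $A(f)\in C(X,C_p(Y))$ I would invoke the fact that $C_p(Y)$ carries the subspace topology from $\mathbb{R}^Y$ with the product topology, so that continuity of a map $X\to C_p(Y)$ is equivalent to continuity of each coordinate $x\mapsto A(f)(x)(y_0)=f(x,y_0)$; this is precisely separate continuity in the first variable. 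Linearity is immediate.

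Next I would produce the inverse $B\colon C_p(X,C_p(Y))\to SC_p(X\times Y)$ by the formula $B(g)(x,y)=g(x)(y)$ and verify that $B(g)$ is separately continuous: fixing $x$ gives $g(x)\in C_p(Y)$, hence continuity in $y$, while fixing $y$ gives $\mathrm{ev}_y\circ g$, continuous because evaluation at $y$ is continuous on $C_p(Y)\sub\mathbb{R}^Y$ and $g$ itself is continuous. The identities $A\circ B=\mathrm{id}$ and $B\circ A=\mathrm{id}$ follow directly from the pointwise formulae.

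Finally, to show that $A$ is a homeomorphism I would simply compare the defining seminorms of the two locally convex topologies. The topology on $SC_p(X\times Y)$, inherited from $\mathbb{R}^{X\times Y}$, is generated by seminorms of the form $\tilde q_{x_1,\ldots,x_n;y_1,\ldots,y_m}(f)=\max_{i,j}|f(x_i,y_j)|$, while the pointwise topology on $C_p(X,C_p(Y))$ is generated, as recalled in the Preliminaries, by $q_{x_1,\ldots,x_n;y_1,\ldots,y_m}(g)=\max_{i,j}|g(x_i)(y_j)|$. By the very definition of $A$, the substitution $g=A(f)$ turns one family into the other verbatim, so $A$ preserves the generating seminorms; hence both $A$ and $A^{-1}$ are continuous and $A$ is the desired isomorphism.

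I do not anticipate a real obstacle, since the whole argument is formal once one recognises that continuity of a map into $C_p(Y)\sub\mathbb{R}^Y$ is coordinatewise. The only mildly delicate point is this equivalence, which simultaneously explains why $A(f)$ lands in $C(X,C_p(Y))$ precisely when $f$ is separately continuous and why the inverse map $B$ lands back in $SC_p(X\times Y)$.
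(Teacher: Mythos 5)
Your proposal is correct and follows essentially the same route as the paper: check that $A$ lands in $C_p(X,C_p(Y))$ using separate continuity, exhibit the explicit inverse $g\mapsto\big[(x,y)\mapsto g(x)(y)\big]$, and match the generating seminorms of the two pointwise topologies. The only cosmetic difference is that you replace the paper's explicit $\varepsilon$--neighbourhood estimates by the observation that continuity of a map into $C_p(Y)\subseteq\mathbb{R}^Y$ is coordinatewise, which is the same computation in disguise.
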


The theorem implies that $C_p(X\times Y)$ embeds as a dense linear subspace of the space $C_p(X,C_p(Y))$.

Let $X$ and $Y$ be Tychonoff spaces. By $\sigma$ let us denote the weak topology on $X\times Y$ generated by the family of all separately continuous functions. This topology was investigated by Henriksen and Woods \cite{HW}, who proved among others that:
\begin{itemize}
\item $(X\times Y,\sigma)$ is a Tychonoff space (\cite[Theorem 4.8]{HW});
\item $(X\times Y,\sigma)$ is not pseudocompact provided that both $X$ and $Y$ are infinite (\cite[Corollary 5.9.b]{HW});
\item $C(X\times Y,\sigma)=SC(X\times Y)$, i.e. every continuous function on $(X\times Y,\sigma)$ is separately continuous on $X\times Y$ and \textit{vice versa} (\cite[Proposition 2.1]{HW}).
\end{itemize}

By the last listed result and Theorem \ref{thm:sepcont_iso}, we immediately get the following theorem, which allows us to treat spaces of the form $C_p(X,C_p(Y))$ the same way as spaces of the form $C_p(X\times Y)$.

\begin{theorem}\label{thm:cpxysigma_cpxcpy}
  Let $X$ and $Y$ be Tychonoff spaces. Then, the space $C_p(X\times Y,\sigma)$ is isomorphic to the space $C_p(X,C_p(Y))$.
\end{theorem}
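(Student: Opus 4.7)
The proof will be essentially a direct assembly of two ingredients already collected, so the plan is extremely short. First I would observe that the pointwise topology on a function space depends only on the underlying set of the domain, not on the topology placed on it: for any set $Z$ and any collection $\mathcal{F}\subseteq\mathbb{R}^Z$, the subspace topology on $\mathcal{F}$ inherited from $\mathbb{R}^Z$ is determined by the evaluation maps $\ev_z\colon\mathcal{F}\to\mathbb{R}$ for $z\in Z$. Applying this to $Z=X\times Y$ (as a set), the underlying sets of the function spaces $C_p(X\times Y,\sigma)$ and $SC_p(X\times Y)$ carry exactly the same pointwise topology.

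Next I would invoke Henriksen and Woods \cite[Proposition 2.1]{HW}, quoted above as the third bulleted property of the topology $\sigma$, which gives the set-theoretic equality
\[
C(X\times Y,\sigma)=SC(X\times Y).
\]
Combined with the previous paragraph, this yields the topological equality $C_p(X\times Y,\sigma)=SC_p(X\times Y)$. Finally, Theorem \ref{thm:sepcont_iso} provides the isomorphism $A\colon SC_p(X\times Y)\to C_p(X,C_p(Y))$, and composing gives the desired isomorphism $C_p(X\times Y,\sigma)\approx C_p(X,C_p(Y))$.

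There is no real obstacle here; the only thing to be careful about is the subtle but trivial point in the first paragraph, namely that replacing the product topology on $X\times Y$ by the finer topology $\sigma$ enlarges the set of continuous functions from $C(X\times Y)$ up to $SC(X\times Y)$, but does not alter how the pointwise topology is computed on this enlarged set. Once that is acknowledged, the theorem is a one-line consequence of Theorem \ref{thm:sepcont_iso} and the Henriksen--Woods identity.
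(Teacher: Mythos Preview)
Your proposal is correct and follows exactly the same route as the paper, which derives the theorem in one line from the Henriksen--Woods identity $C(X\times Y,\sigma)=SC(X\times Y)$ together with Theorem~\ref{thm:sepcont_iso}. Your explicit remark that the pointwise topology depends only on the underlying set of the domain is a welcome clarification of why the set-theoretic equality upgrades to a topological one.
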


Since for every Tychonoff spaces $X$ and $Y$ the spaces $C_p(X\times Y,\sigma)$ and $C_p(Y\times X,\sigma)$ are isomorphic, Theorem \ref{thm:cpxysigma_cpxcpy} implies trivially a well-known fact that the spaces $C_p(X,C_p(Y))$ and $C_p(Y,C_p(X))$ are isomorphic, too.

The following folklore theorem follows immediately from the main result of Uspensky \cite{Usp82}.

\begin{theorem}\label{thm:cpxy_not_iso_cpxcpy}
  Let $X$ and $Y$ be infinite pseudocompact spaces such that the product $X\times Y$ is pseudocompact. Then, the space $C_p(X\times Y)$ cannot be mapped onto the space $C_p(X,C_p(Y))$ by a continuous linear map. In particular, $C_p(X\times Y)$ is not isomorphic to $C_p(X,C_p(Y))$.
\end{theorem}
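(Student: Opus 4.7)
The plan is to derive a contradiction by assembling the three ingredients the authors have just collected: the isomorphism $C_p(X,C_p(Y))\approx C_p(X\times Y,\sigma)$ from Theorem \ref{thm:cpxysigma_cpxcpy}; the Henriksen--Woods fact that $(X\times Y,\sigma)$ fails to be pseudocompact whenever both factors are infinite; and the two exercises from Tkachuk's book recalled immediately before the theorem statement.

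First, suppose toward a contradiction that there exists a continuous linear surjection
\[
T\colon C_p(X\times Y)\twoheadrightarrow C_p(X,C_p(Y)).
\]
Composing $T$ with the isomorphism from Theorem \ref{thm:cpxysigma_cpxcpy}, I would obtain a continuous linear surjection
\[
\widetilde{T}\colon C_p(X\times Y)\twoheadrightarrow C_p(X\times Y,\sigma).
\]
Since $\widetilde{T}$ is a continuous linear map between topological vector spaces, by \cite[Ex.\ 134.iv]{Tka4} it is uniformly continuous with respect to the natural uniformities on domain and codomain.

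Next, I would invoke the hypothesis that the product $X\times Y$ is pseudocompact together with \cite[Ex.\ 193]{Tka4}: if $W$ is pseudocompact and $C_p(W)$ admits a uniformly continuous surjection onto $C_p(Z)$, then $Z$ is pseudocompact. Applying this with $W=X\times Y$ (which carries the usual product topology) and $Z=(X\times Y,\sigma)$, the existence of the uniformly continuous surjection $\widetilde{T}$ forces $(X\times Y,\sigma)$ to be pseudocompact.

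This however contradicts \cite[Corollary 5.9.b]{HW}, which asserts that $(X\times Y,\sigma)$ is never pseudocompact as soon as both $X$ and $Y$ are infinite. Hence no such continuous linear surjection $T$ can exist, which in particular rules out any linear homeomorphism between $C_p(X\times Y)$ and $C_p(X,C_p(Y))$. The argument is essentially just a diagram chase once the three cited results are in hand, so I do not foresee a genuine obstacle; the only point deserving care is the verification that the uniformity for which Tkachuk's Ex.\ 193 is formulated coincides with the one with respect to which a continuous linear map is automatically uniformly continuous, but both are the standard additive uniformity on $C_p(\cdot)$, so this is immediate.
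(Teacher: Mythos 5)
Your argument is correct and is essentially identical to the paper's proof: both compose a hypothetical continuous linear surjection with the isomorphism of Theorem \ref{thm:cpxysigma_cpxcpy}, invoke uniform continuity of continuous linear maps together with Tkachuk's Ex.~193 applied to the pseudocompact space $X\times Y$, and contradict the Henriksen--Woods fact that $(X\times Y,\sigma)$ is not pseudocompact. No gaps.
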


Recall that the product of a compact space and a pseudocompact space is pseudocompact.

\begin{corollary}\label{cor:compact_pseudocompact}
  If $X$ is an infinite compact space and $Y$ an infinite pseudocompact space, then the space $C_p(X\times Y)$ cannot be  mapped onto the space $C_p(X,C_p(Y))$ by a continuous linear map.
\end{corollary}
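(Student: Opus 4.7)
The plan is to reduce the corollary directly to Theorem~\ref{thm:cpxy_not_iso_cpxcpy}. Since every compact space is pseudocompact, both $X$ and $Y$ are infinite pseudocompact, so the only remaining hypothesis to verify in order to invoke Theorem~\ref{thm:cpxy_not_iso_cpxcpy} is that the product $X\times Y$ is itself pseudocompact.

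First I would recall the classical fact that the product of a compact space with a pseudocompact space is pseudocompact. To spell this out in the proof, I would take an arbitrary continuous function $f\colon X\times Y\to\mathbb{R}$ and argue that $f$ must be bounded. The natural approach is to consider, for each $y\in Y$, the continuous function $g(y)=\max_{x\in X}f(x,y)$; one verifies continuity of $g\colon Y\to\mathbb{R}$ using compactness of $X$ (this is a standard application of the tube lemma, which yields that the partial maximum of a jointly continuous function over a compact factor depends continuously on the remaining variable). Since $Y$ is pseudocompact, $g$ is bounded, and hence so is $f$. Thus $X\times Y$ is pseudocompact.

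Having established this, the conclusion is immediate: Theorem~\ref{thm:cpxy_not_iso_cpxcpy} applies to the pair $(X,Y)$, and guarantees that no continuous linear surjection $C_p(X\times Y)\to C_p(X,C_p(Y))$ exists. There is essentially no obstacle here; the entire content of the corollary has already been absorbed into Theorem~\ref{thm:cpxy_not_iso_cpxcpy}, and the one fact to supply is the tube-lemma argument (or a reference to it, e.g.\ Engelking's book) showing pseudocompactness of the product. If I wanted to avoid even this short digression, I could simply cite the fact that the product of a compact space with a pseudocompact space is pseudocompact as well known, and invoke Theorem~\ref{thm:cpxy_not_iso_cpxcpy}.
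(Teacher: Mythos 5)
Your proposal is correct and follows essentially the same route as the paper, which simply invokes the classical fact that the product of a compact space and a pseudocompact space is pseudocompact and then applies Theorem~\ref{thm:cpxy_not_iso_cpxcpy}. (Only a cosmetic remark: in your tube-lemma sketch take $g(y)=\max_{x\in X}|f(x,y)|$ rather than $\max_{x\in X}f(x,y)$, so that boundedness of $g$ immediately bounds $f$.)
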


\begin{corollary}\label{cor:compact_cpxy_cpxcpy}
  If $X$ and $Y$ are infinite compact spaces, then $C_p(X\times Y)$ is not isomorphic to $C_p(X,C_p(Y))$.
\end{corollary}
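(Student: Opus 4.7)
The plan is to deduce the corollary directly from Corollary \ref{cor:compact_pseudocompact}, which is the main technical result of the section. Specifically, I would argue as follows: every compact Hausdorff space is pseudocompact (indeed, every real-valued continuous function on a compact space is bounded, as its image is a compact subset of $\mathbb{R}$). Hence if $X$ and $Y$ are infinite compact spaces, then $X$ is infinite compact and $Y$ is infinite pseudocompact, so the hypotheses of Corollary \ref{cor:compact_pseudocompact} are satisfied.

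By that corollary, there is no continuous linear surjection from $C_p(X\times Y)$ onto $C_p(X,C_p(Y))$. Now, an isomorphism (in our sense, i.e.\ a linear homeomorphism) is in particular a continuous linear bijection, hence a continuous linear surjection. Therefore $C_p(X\times Y)$ cannot be isomorphic to $C_p(X,C_p(Y))$, which is the desired conclusion.

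There is no real obstacle here beyond invoking Corollary \ref{cor:compact_pseudocompact}; the entire content of the statement is packed into that earlier result (which in turn rests on Theorem \ref{thm:cpxy_not_iso_cpxcpy} together with the Henriksen--Woods fact that $(X\times Y,\sigma)$ fails to be pseudocompact whenever $X$ and $Y$ are infinite, and on the transfer of pseudocompactness under uniformly continuous surjections between $C_p$-spaces). The only check is the trivial observation that compactness implies pseudocompactness, which makes the corollary an immediate specialisation of the preceding one.
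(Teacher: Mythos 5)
Your argument is correct and is exactly the paper's route: the corollary is stated as an immediate specialisation of Corollary \ref{cor:compact_pseudocompact} (compact implies pseudocompact, and an isomorphism is in particular a continuous linear surjection), which itself rests on Theorem \ref{thm:cpxy_not_iso_cpxcpy}. Nothing is missing.
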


Note that one cannot drop the compactness assumption in Corollary \ref{cor:compact_cpxy_cpxcpy}. Indeed,  if $X$ and $Y$ are infinite and $Y$ is discrete, then the canonical mapping $A\colon C_{p}(X\times Y)\to C_{p}(X,C_p(Y))$ defined above is easily seen to be an isomorphism. 
Also, \cite[Corollary 6.15]{HW} states that if $X$ is a (locally) separable Tychonoff space and $Y$ is a P-space, then $C(X\times Y)=SC(X\times Y)$ and hence $\sigma$ coincides with the product topology on $X\times Y$---it follows by Theorem  \ref{thm:cpxysigma_cpxcpy} that the spaces $C_p(X\times Y)$ and $C_p(X,C_p(Y))$ are isomorphic. In particular, if $X$ is a metric compact space or $X=\beta\omega$ and $Y$ a P-space, then $C_p(X\times Y)\approx C_p(X,C_p(Y))$.

\begin{remark}\label{rem:km_cp}
  Krupski and Marciszewski \cite[Corollary 3.2]{KM} proved that for all infinite compact spaces $X$ and $Y$ the locally convex spaces $C_p(X)$ and $C(Y)_w$ are never isomorphic. It follows that $C_p(X\times Y)$ and $C(X\times Y)_w$ are not isomorphic. Since $C(X\times Y)_w$ is isomorphic to $C(X,C(Y))_w$, we get that $C_p(X\times Y)$ is not isomorphic to $C(X,C(Y))_w$. Thus, Corollary \ref{cor:compact_cpxy_cpxcpy} may be thought of as a $C_p$-counterpart of the result of Krupski and Marciszewski. This observation leads to the following relevant question, some issues of which we will study in Section \ref{sec:c0p_cpxew}.
\end{remark}

\begin{problem}\label{prob:cpxy_cpxcyw}
  For which infinite compact spaces $X$ and $Y$ are the spaces $C_p(X\times Y)$ and $C_p(X,C(Y)_w)$ isomorphic?
\end{problem}

Note that if $X$ and $Y$ are both infinite countable compact spaces, then $C_p(X\times Y)$ is metrisable, whereas $C_p(X,C(Y)_w)$ is not, so these function spaces are not isomorphic.

Let us now provide an example and a question motivated by it.

\begin{example}
  Let $X=[0,1]^\omega$ be endowed with the product topology. From Corollary \ref{cor:compact_cpxy_cpxcpy} it follows that $C_p(X\times X)$ is not isomorphic to $C_p(X,C_p(X))$. However, since $C_p(X\times X)$ is isomorphic to $C_p(X)$, $C_p(X\times X)$ is isomorphic to a complemented subspace of $C_p(X,C_p(X))$ (by~\cite[Theorem I.4.4]{Schmets}). A similar argument may be provided for any uncountable metrisable totally disconnected compact space $X$, e.g. for the Cantor space $2^\omega$.
\end{example}

\begin{problem}\label{problem:cpxx_compl}
  For which infinite compact spaces $X$, is the space $C_p(X\times X)$ isomorphic to a complemented subspace of the space $C_p(X,C_p(X))$? In particular, does there exist an infinite compact space $X$ such that $C_p(X)$ is not isomorphic to $C_p(X\times X)$ and $C_p(X\times X)$ is isomorphic to a complemented subspace of $C_p(X,C_p(X))$?
\end{problem}

In Corollary \ref{cor:cpx_not_jnp_cpxx_not_compl} we describe a class of infinite compact spaces $X$ such that $C_p(X\times X)$ is not complemented in $C_p(X,C_p(X))$. Note that if $X$ is an infinite compact space such that $C_p(X\times X)$ is complemented in $C_p(X,C_p(X))$, then the latter may be linearly mapped onto the former---a phenomenon which by Corollary \ref{cor:compact_pseudocompact} cannot occur in the reverse direction.

We finish the section with the following observation.

\begin{proposition}\label{pro}
  Let $E$ be a normed space and $X$ a Tychonoff space. Then, the original pointwise topology  of $C_{p}(X,E)$  coincides with the  weak topology of $C_{p}(X,E)$  if and only if $E$ is finite-dimensional space.
\end{proposition}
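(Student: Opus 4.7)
The plan is to first pin down the topological dual of $C_p(X,E)$ and then compare pointwise and weak topologies in each direction directly. Since the pointwise topology is automatically finer than the weak one (pointwise-continuous linear functionals are in particular continuous for themselves), the issue is only to decide when the pointwise topology is \emph{no finer}, and the argument naturally splits along the dimension of $E$.

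First, I would identify the dual space. Since $C_p(X,E)$ carries the topology induced from the product $E^X$, any continuous linear functional on $C_p(X,E)$ extends to one on the linear span of $C_p(X,E)$ inside $E^X$ via Hahn--Banach, and continuous linear functionals on $E^X$ with the product topology depend on only finitely many coordinates. Hence every $\varphi\in C_p(X,E)'$ has the form
\[
\varphi(f)=\sum_{i=1}^n \ell_i\big(f(x_i)\big),\qquad x_i\in X,\ \ell_i\in E'.
\]
This description is the technical heart of the argument; it is routine but must be stated cleanly.

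For the sufficiency direction, assume $\dim E=d<\infty$ and pick a basis $\ell_1,\ldots,\ell_d$ of $E'$. The norm on $E$ is equivalent to $e\mapsto\max_k|\ell_k(e)|$, so each basic pointwise semi-norm $f\mapsto\max_i\|f(x_i)\|$ is equivalent to $f\mapsto\max_{i,k}|\ell_k(f(x_i))|$, which is a semi-norm defining the weak topology. Therefore the pointwise topology is contained in (and hence equals) the weak topology.

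For the necessity direction, assume $\dim E=\infty$ and fix any $x_0\in X$. I would show that the pointwise neighbourhood $U=\{f\in C_p(X,E):\|f(x_0)\|<1\}$ is not a weak neighbourhood of $0$. Given any $\varphi_1,\ldots,\varphi_n\in C_p(X,E)'$, written by the formula above as $\varphi_i(f)=\sum_j\ell_{ij}(f(x_{ij}))$, and testing on the constant function $f_e\equiv e$ ($e\in E$), one gets $\varphi_i(f_e)=\psi_i(e)$ with $\psi_i=\sum_j\ell_{ij}\in E'$. Since $E$ is infinite-dimensional, $\bigcap_{i=1}^n\ker\psi_i$ is infinite-dimensional and in particular contains some $e$ with $\|e\|\geq 1$. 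The corresponding constant function $f_e$ lies in every basic weak neighbourhood of $0$ determined by $\varphi_1,\ldots,\varphi_n$ yet does not belong to $U$. Thus the weak topology is strictly coarser than the pointwise topology.

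The only point requiring genuine care is the dual identification: one must rule out exotic functionals coming from the passage to the subspace $C_p(X,E)\subseteq E^X$, which is precisely where Hahn--Banach (applicable because $E^X$ is locally convex) enters. Everything else is a one-line computation with semi-norms (sufficiency) or a finite-codimension kernel argument (necessity).
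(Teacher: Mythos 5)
Your proof is correct, but it takes a genuinely different route from the paper's. The paper proves sufficiency by noting $C_p(X,E)\approx C_p(X,\mathbb{R}^n)\approx C_p(X)^n$ and using that $C_p(X)$ already coincides with its weak topology, and necessity by citing Schmets' theorem that $E$ is (complemented, hence in particular topologically) embedded in $C_p(X,E)$, so equality of the two topologies would force the norm and weak topologies of the infinite-dimensional normed space $E$ to agree, contradicting the classical fact. You instead work directly with the dual: you identify $C_p(X,E)'$ with finite sums $\sum_i \delta_{x_i}\otimes\ell_i$ (the algebraic identification with $L_p(X)\otimes E'$ that the paper quotes elsewhere from Ferrando--K\k{a}kol), prove the finite-dimensional direction by an explicit seminorm comparison using a basis of $E'$, and prove the infinite-dimensional direction by testing basic weak neighbourhoods on constant functions via a finite-codimension kernel argument---in effect re-proving, inside $C_p(X,E)$, the classical statement that the weak topology of an infinite-dimensional normed space is strictly coarser than the norm topology. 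Your argument is more self-contained (no appeal to Schmets or to the $C_p(X)^n$ identification, and for necessity you only need the copy of $E$ given by constant functions, not its complementedness), at the cost of carrying out the dual identification explicitly; the paper's proof is shorter because it leans on those structural facts. One cosmetic remark: Hahn--Banach extends a functional from the subspace $C_p(X,E)$ to all of $E^X$ (the phrase ``linear span of $C_p(X,E)$'' is redundant, as it is already a linear subspace), and it is then the standard description of $(E^X)'$ as the direct sum of copies of $E'$ indexed by $X$ that yields your representation formula.
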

\begin{proof} If $E$ is finite-dimensional, say $E$ is isomorphic to $\mathbb{R}^{n}$ for some $n\in\omega$, then we have: $C_{p}(X,E)\approx C_{p}(X,\mathbb{R}^{n})\approx C_p(X)^n$.
Hence, the pointwise topology and weak topology coincide in $C_p(X,E)$, as they do on $C_p(X)$. Conversely, if both of the topologies coincide, then $E$ is finite-dimensional. Indeed, by \cite[Theorem I.4.4]{Schmets} $E$ is isomorphic to a complemented subspace of $C_{p}(X,E)$ and the weak topology on this complemented subspace agrees with the subspace topology inherited from the weak topology of $C_p(X,E)$. Since $E$ is a normed space on which the norm topology and the weak topology agree, it is finite-dimensional.
\end{proof}

\begin{corollary}
If $X$ is a Tychonoff space and $Y$ is an infinite compact space, then the spaces $C_{p}(X\times Y)$ and $C_{p}(X,C(Y))$  are not isomorphic.
\end{corollary}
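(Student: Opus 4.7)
The plan is to derive a contradiction via Proposition \ref{pro}, exploiting the asymmetry between the two spaces with respect to the weak topology.

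First I would recall the basic fact, noted in Section~2 of the excerpt, that for any Tychonoff space $Z$ the pointwise topology on $C_p(Z)$ coincides with its weak topology, that is, $C_p(Z)=(C_p(Z))_w$. Applied to $Z=X\times Y$, this gives $C_p(X\times Y)=(C_p(X\times Y))_w$.

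Next I would argue by contradiction. Suppose a linear homeomorphism $T\colon C_p(X\times Y)\to C_p(X,C(Y))$ exists. Because any isomorphism of locally convex spaces maps the continuous dual onto the continuous dual, it is automatically a homeomorphism for the weak topologies as well; in other words, $T$ induces an isomorphism $(C_p(X\times Y))_w\approx (C_p(X,C(Y)))_w$. Combining this with the previous observation, the original pointwise topology of $C_p(X,C(Y))$ must coincide with its weak topology.

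Since $Y$ is an infinite compact space, $C(Y)$ is an infinite-dimensional normed space, so Proposition~\ref{pro} yields that the pointwise and weak topologies on $C_p(X,C(Y))$ do \emph{not} coincide. This contradicts the conclusion of the previous paragraph, so no such isomorphism $T$ can exist. The argument is essentially a bookkeeping of the two topologies; the only place where any real work has been done is inside Proposition~\ref{pro}, which is already established, so there is no serious obstacle here beyond making sure that the weak topology of an isomorphic image really is the image of the weak topology—a standard fact about continuous linear maps between locally convex spaces.
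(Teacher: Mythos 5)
Your argument is correct and is exactly the intended derivation from Proposition~\ref{pro}: since $C_p(X\times Y)=(C_p(X\times Y))_w$ and an isomorphism of locally convex spaces is also a weak--weak isomorphism, an isomorphism with $C_p(X,C(Y))$ would force the pointwise and weak topologies of $C_p(X,C(Y))$ to coincide, contradicting Proposition~\ref{pro} because $C(Y)$ is infinite-dimensional for infinite compact $Y$. This matches the paper's (implicit) proof of the corollary.
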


\section{The Josefson--Nissenzweig Property\label{sec:jnp}}

Following Banakh and Gabriyelyan~\cite{BG}, we say that a locally convex space $E$ has \textit{the Josefson--Nissenzweig property}, or \textit{the JNP} in short, if the identity map
\[
  (E',\sigma(E',E)) \to (E', \beta^*(E',E))
\]
is not sequentially continuous, i.e. if the dual space $E'$ contains a weak$^*$ null sequence which does not converge in the topology~$\beta^*(E',E)$. The topology $\beta^*(E',E)$ can be described as the topology of uniform convergence on so-called \textit{barrel-bounded sets}, i.e. on sets which have the property that they are absorbed by all barrels---see~\cite[p.~3]{BG} for more details. Note that the JNP is preserved by isomorphisms, since the transpose of an isomorphism is also an isomorphism for the above topologies---see e.g. Corollary 8.6.6 in~\cite[p.~161]{Jarchow}. By the famous Josefson--Nissenzweig Theorem, see e.g.~\cite{Josefson,Nissenzweig,BD}, every infinite-dimensional Banach space has the JNP. On the other hand, no Montel space enjoys the JNP. Indeed: First, since Montel spaces are barrelled, the topologies~$\beta^*(E',E)$ and $\beta(E',E)$, where the strong topology~$\beta(E', E)$ is the topology of uniform convergence on bounded subsets of~$E$, coincide. Second, the strong dual of a Montel space is again Montel and hence on bounded sets the topologies $\sigma(E',E)$ and $\beta(E',E)$ coincide. In~\cite{BLV}, Bonet,  Lindström and Valdivia showed that for Fr\'{e}chet spaces this is already a characterisation of the JNP, i.e. a Fr\'{e}chet space has the JNP if and only if it is not Montel; see also \cite{LS} for a similar characterisation of non-Schwarz Fr\'echet spaces.

In the case of $C_p(X)$-spaces, Banakh and Gabriyelyan proved the following characterisation of the Josefson--Nissenzweig property.

\begin{theorem}[{\cite[Theorem 3.4]{BG}}]\label{thm:bg_cpx_jnp}
For every Tychonoff space $X$, the space $C_p(X)$ has the JNP if and only if there exists a sequence of functionals $(\varphi_n)_{n\in\omega}$ in the dual space $L_p(X)$ of $C_p(X)$ such that $\varphi_n(f)\to0$ for every $f\in C(X)$ and $\|\varphi_n\|=1$ for every $n\in\omega$.
\end{theorem}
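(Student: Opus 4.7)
My plan rests on two main ingredients. The first is the polar identity
\[
  \|\varphi\| = \sup\bigl\{|\varphi(f)|:\ f\in C(X),\ \|f\|_\infty\le 1\bigr\}
\]
for $\varphi\in L_p(X)$, which follows from the Tychonoff property: writing $\varphi=\sum_{i\le k}\alpha_i\delta_{x_i}$ with the $x_i$ distinct, complete regularity produces $g\in C(X)$ with $g(x_i)=\sgn(\alpha_i)$, and truncating via $\max(-1,\min(1,g))$ yields $f\in C(X)$ with $\|f\|_\infty\le 1$ and $\varphi(f)=\sum|\alpha_i|=\|\varphi\|$. The second ingredient is that $B:=\{f\in C(X):\|f\|_\infty\le 1\}$ is barrel-bounded in $C_p(X)$, which via polars is equivalent to the Banach--Steinhaus-type statement that every $\sigma(L_p(X),C(X))$-bounded subset of $L_p(X)$ is $\|\cdot\|$-bounded. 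I would prove this via a gliding-hump argument on the finite supports of a putative weak*-bounded sequence $(\psi_n)$ with $\|\psi_n\|\to\infty$: after subsequencing, either the supports all lie in a fixed finite set $F$, in which case $\|\psi_n\|\to\infty$ forces some coefficient to blow up and Tychonoff separation gives an $f$ with $\psi_n(f)$ unbounded; or one can extract a further subsequence whose supports are pairwise disjoint, then extend a sign-matching function continuously to $X$ with $|f|\le 1$ to obtain $\psi_n(f)=\|\psi_n\|\to\infty$, contradicting weak*-boundedness in either case.

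With these ingredients in hand, $(\Leftarrow)$ is immediate: if $\varphi_n(f)\to 0$ for every $f\in C(X)$ and $\|\varphi_n\|=1$, the polar identity gives $\sup_{f\in B}|\varphi_n(f)|=1\not\to 0$, and barrel-boundedness of $B$ means the identity $(L_p(X),\sigma(L_p(X),C(X)))\to (L_p(X),\beta^*(L_p(X),C(X)))$ fails to send $(\varphi_n)$ to a null sequence, so $C_p(X)$ has the JNP.

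For $(\Rightarrow)$, assume the JNP, witnessed by a weak*-null $(\psi_n)\subseteq L_p(X)$ and a barrel-bounded $B'\sub C_p(X)$ with $\sup_{f\in B'}|\psi_n(f)|>\eps$ for infinitely many $n$; pass to that subsequence. The key step already yields $(\|\psi_n\|)$ bounded above. I would then pass to a further subsequence on which $\|\psi_n\|$ is also bounded below by some $c>0$, after which $\varphi_n:=\psi_n/\|\psi_n\|$ satisfies $\|\varphi_n\|=1$ and $|\varphi_n(f)|\le c^{-1}|\psi_n(f)|\to 0$ for every $f\in C(X)$, as required. The main obstacle I anticipate is ruling out $\|\psi_n\|\to 0$: here I would use the seminorm $p_K(f):=\sup_{x\in K}|f(x)|$ with $K:=\bigcup_n\supp(\psi_n)$, which under suitable control on $K$ is a finite lower-semicontinuous seminorm on $C_p(X)$, so barrel-boundedness of $B'$ gives $\sup_{f\in B'}p_K(f)<\infty$ and hence $\sup_{f\in B'}|\psi_n(f)|\le\|\psi_n\|\cdot\sup_{f\in B'}p_K(f)\to 0$, contradicting the witnessing property of $B'$. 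Ensuring finiteness of $p_K$ on $C(X)$, i.e.\ functional boundedness of $K$, may require further massaging of the subsequence and of the witnessing barrel-bounded set in terms of the fine structure of $L_p(X)$, and is the technical crux extending the standard compact/pseudocompact argument to arbitrary Tychonoff $X$.
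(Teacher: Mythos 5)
The paper gives no proof of this statement (it is quoted verbatim from Banakh--Gabriyelyan), so your proposal must stand on its own. Your $(\Leftarrow)$ direction is essentially right: the polar identity is correct, and the fact you need --- that $B=\{f\in C(X):\ \|f\|_\infty\le 1\}$ is barrel-bounded in $C_p(X)$, equivalently that every $\sigma(L_p(X),C(X))$-bounded subset of $L_p(X)$ is $\|\cdot\|$-bounded --- is true. However, your sketched proof of that fact is not: the dichotomy ``all supports lie in a fixed finite set, or a subsequence has pairwise disjoint supports'' is not exhaustive (the supports may keep a persistent common core while acquiring new points), and even in the disjoint case one cannot in general prescribe signs by a continuous function on a countable set whose points accumulate on one another. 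A clean repair: each $\varphi\in L_p(X)$ is a norm-continuous functional on the Banach space $C^b(X)$ of bounded continuous functions with dual norm exactly $\|\varphi\|$ (by the same Tychonoff separation used for the polar identity), so pointwise boundedness on $C(X)\supseteq C^b(X)$ plus the classical uniform boundedness principle in $C^b(X)$ gives norm-boundedness at once.

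The genuine gap is in $(\Rightarrow)$, exactly where you flag it, and it is not a cosmetic issue: if $\|\psi_n\|\to 0$ along the witnessing subsequence, then $\psi_n/\|\psi_n\|$ need not be weak$^*$ null, so your construction produces nothing, and your proposed way of excluding this case does not work as stated. The set $\{f:\ p_K(f)\le 1\}$ with $K=\bigcup_n\supp(\psi_n)$ is a barrel only when $K$ is functionally bounded (otherwise it is not absorbing), and nothing in your setup yields functional boundedness of $K$ for an arbitrary witnessing weak$^*$ null sequence; note that the KSZ-type result quoted in the paper (continuous functions are bounded on the union of supports) is proved for sequences already normalised to $\|\varphi_n\|=1$, so appealing to anything of that kind here would be circular. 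Closing this case requires genuinely more information about barrel-bounded subsets of $C_p(X)$ (for instance, that they are uniformly bounded on functionally bounded subsets of $X$, which you do use, together with a decomposition of the functionals $\psi_n$ adapted to such sets), and this is precisely the substance of the Banakh--Gabriyelyan argument you are trying to reproduce. As written, the $(\Rightarrow)$ direction is incomplete.
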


If a sequence $(\varphi_n)_{n\in\omega}$ of functionals on a given space $C_p(X)$ has the properties as in the above theorem, then it will be called \textit{a JN-sequence} on $X$. 
The existence of JN-sequences on arbitrary spaces was intensively studied in \cite{BKS}, \cite{KMSZ}, \cite{KSZproc}, and \cite{KSZfm}, where several classes of Tychonoff spaces were recognized to (not) admit such sequences (e.g. it appears that for every infinite compact spaces $X$ and $Y$ their product admits a JN-sequence, see \cite[Theorem 1.2]{KSZproc}). Note that if $X$ is pseudocompact and $C_p(X)$ has the JNP, then every JN-sequence on $X$ satisfies already the conclusion of the Josefson--Nissenzweig theorem for the Banach space $C(X)$ (by virtue of the Riesz representation of continuous functionals on $C(X)$).

The following theorem provides a simple criterion for a space $C_p(X\times Y,\sigma)$ to have the JNP.

\begin{theorem}\label{thm:cpxysigma_jnp}
  Let $X$ and $Y$ be infinite Tychonoff spaces. Then, the space $C_p(X\times Y,\sigma)$ has the JNP if and only if the space $C_p(X)$ has the JNP or the space $C_p(Y)$ has the JNP.
\end{theorem}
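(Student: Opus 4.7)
By Theorem~\ref{thm:cpxysigma_cpxcpy}, $C_p(X\times Y,\sigma)$ is isomorphic to $C_p(X,C_p(Y))$, and since $(X\times Y,\sigma)$ is Tychonoff by Henriksen--Woods, Theorem~\ref{thm:bg_cpx_jnp} reduces the stated equivalence to showing that a \jns\ exists on $(X\times Y,\sigma)$ if and only if a \jns\ exists on $X$ or on $Y$.

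For the ``if'' direction, given a \jns\ $(\varphi_n)$ on $X$, fix any $y_0\in Y$ and lift it to
\[
\psi_n:=\sum_{x\in\supp(\varphi_n)}\alpha^{\varphi_n}_x\,\delta_{(x,y_0)}\in L_p(X\times Y).
\]
The support of $\psi_n$ is exactly $\supp(\varphi_n)\times\{y_0\}$, so $\|\psi_n\|=\|\varphi_n\|=1$; and for every $f\in SC(X\times Y)$ the section $f(\cdot,y_0)$ lies in $C(X)$, whence $\psi_n(f)=\varphi_n(f(\cdot,y_0))\to 0$. Hence $(\psi_n)$ is a \jns\ on $(X\times Y,\sigma)$. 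The case when $C_p(Y)$ has the \textsf{JNP} is handled symmetrically by fixing some $x_0\in X$.

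For the ``only if'' direction, let $(\psi_n)$ be a \jns\ on $(X\times Y,\sigma)$ and consider the push-forwards $\mu_n:=(\pi_X)_\ast\psi_n\in L_p(X)$ and $\nu_n:=(\pi_Y)_\ast\psi_n\in L_p(Y)$. For $g\in C(X)$ one has $g\circ\pi_X\in C(X\times Y)\subseteq SC(X\times Y)$, so $\mu_n(g)=\psi_n(g\circ\pi_X)\to 0$ and $\|\mu_n\|\le\|\psi_n\|=1$; analogously $\nu_n\to 0$ weakly* on $C(Y)$ with $\|\nu_n\|\le 1$. If $\limsup_n\|\mu_n\|>0$ (resp.\ $\limsup_n\|\nu_n\|>0$), pass to a subsequence on which $\|\mu_n\|\ge c>0$ and renormalize $\mu_n/\|\mu_n\|$ to obtain a \jns\ on $X$ (resp.\ on $Y$), finishing the argument.

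The main obstacle is the \emph{balanced} subcase $\|\mu_n\|\to 0$ and $\|\nu_n\|\to 0$ simultaneously, in which both naive push-forwards degenerate. Decompose $\psi_n=\sum_{x\in A_n}\delta_x\otimes\varphi^n_x$ with $A_n=\pi_X(\supp\psi_n)$ finite and $\varphi^n_x\in L_p(Y)$ the fibre measures, so that $\sum_{x\in A_n}\|\varphi^n_x\|=1$. Testing $\psi_n$ against product-type SC functions $g\otimes h$ (which are even jointly continuous) yields that for every fixed $h\in C(Y)$ the auxiliary measure $\mu_n^h:=\sum_{x\in A_n}\varphi^n_x(h)\delta_x\in L_p(X)$ is weakly* null with $\|\mu_n^h\|\le\|h\|_\infty$. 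The strategy in this subcase is to exhibit some $h\in C(Y)$ with $\limsup_n\|\mu_n^h\|>0$ (producing a \jns\ on $X$ after renormalization), or symmetrically some $g\in C(X)$ playing the analogous role on the $Y$-side. The hard part is that this $h$ (or $g$) must be \emph{independent of $n$}, so one cannot naively take $h$ to be a sign extension of some distinguished fibre $\varphi^n_{x_n}$; resolving this requires a careful topological thinning of the sequence $(\psi_n)$ based on Tychonoff separation in $X$ and $Y$ together with the structural properties of $\sigma$ provided by Henriksen--Woods, so that a single SC test function can simultaneously detect the signed fibre structure of all $\psi_n$ in the thinned subsequence.
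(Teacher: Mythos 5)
Your ``if'' direction is fine and is essentially the paper's argument: fix $y_0\in Y$, lift the \jns\ on $X$ to point masses at $(x,y_0)$, note the norm is preserved and that $\psi_n(f)=\varphi_n(f(\cdot,y_0))\to0$ because sections of separately continuous functions are continuous. The problem is the ``only if'' direction: you correctly identify that the push-forwards $(\pi_X)_\ast\psi_n$, $(\pi_Y)_\ast\psi_n$ can both degenerate by cancellation, but in that ``balanced'' subcase you only state a strategy (``exhibit some $h\in C(Y)$ with $\limsup_n\|\mu_n^h\|>0$ \dots resolving this requires a careful topological thinning \dots'') without carrying it out. That is precisely the hard part of the theorem, so as written the proof has a genuine gap: no argument is given that such an $n$-independent test function $h$ (or $g$) exists, and it is not at all clear how ``Tychonoff separation'' alone would produce it.

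What closes this gap in the paper is a structural fact about the supports, not a push-forward analysis. Setting $\mathcal{S}=\bigcup_n\supp(\psi_n)$, one knows from \cite[Lemma 4.11]{KSZ} that every $f\in C(X\times Y,\sigma)=SC(X\times Y)$ is bounded on $\mathcal{S}$, and then \cite[Corollary 5.9.a]{HW} forces $\mathcal{S}$ to lie in finitely many vertical sections or finitely many horizontal sections. Say $\mathcal{S}\subseteq\bigcup_{i=1}^m\{x_i\}\times Y$; since $\|\psi_n\|=1$, some $i_0$ and a subsequence satisfy $\|\psi_{n_k}\restriction\{x_{i_0}\}\times Y\|\ge 1/m$, and one normalizes these \emph{restrictions} (not push-forwards, so no cancellation across sections can occur). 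To see the normalized restrictions are weak* null on $C(Y)$, a function $f$ on the section is extended to $F(x,y)=g(x)f(x_{i_0},y)\in SC(X\times Y)$, where $g\in C(X)$ is a Urysohn-type function with $g(x_{i_0})=1$, $g(x_i)=0$ for $i\neq i_0$, giving $|\psi_{n_k}\restriction\{x_{i_0}\}\times Y(f)|\le m\,|\psi_{n_k}(F)|\to0$. In other words, the single test function you were looking for is supplied indirectly by the Henriksen--Woods boundedness dichotomy, which eliminates the balanced case altogether; without invoking that (or proving an equivalent concentration result for $\sigma$), your argument does not go through.
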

\begin{proof}
  Assume first that $C_p(X\times Y,\sigma)$ has the JNP and let $(\varphi_n)_{n\in\omega}$ be a JN-sequence on $(X\times Y,\sigma)$. Put $\mathcal{S}=\bigcup_{n\in\omega}\supp(\varphi_n)$. By \cite[Proposition 4.1]{KMSZ}, every continuous function $f\in C(X\times Y,\sigma)$ is bounded on $\mathcal{S}$, that is, $f[\mathcal{S}]$ is a bounded subset of $\mathbb{R}$. By \cite[Corollary 5.9.a]{HW}, $\mathcal{S}$ must be contained in finitely many vertical sections or finitely many horizontal sections of the product $X\times Y$.

  Let us assume that the former happens, i.e. there are $x_1,\ldots,x_m\in X$ such that $\mathcal{S}\subseteq\bigcup_{i=1}^m\{x_i\}\times Y$. Since $\|\varphi_n\|=1$ for every $n\in\omega$, there are $i_0\in\{1,\ldots,m\}$ and a subsequence $(\varphi_{n_k})_{k\in\omega}$ such that $\|\varphi_{n_k}\restriction\{x_{i_0}\}\times Y\|\ge 1/m$ for every $k\in\omega$. We claim that the sequence $(\psi_k)_{k\in\omega}$ defined for every $k\in\omega$ by the formula
  \[\psi_k=(\varphi_{n_k}\restriction\{x_{i_0}\}\times Y)\big/\|\varphi_{n_k}\restriction\{x_{i_0}\}\times Y\|\]
  is a JN-sequence on $\{x_{i_0}\}\times Y$. Obviously, $\|\psi_k\|=1$ for every $k\in\omega$. Let $f\in C_p(\{x_{i_0}\}\times Y)$. Since $X$ is Tychonoff, there is a function $g\in C_p(X)$ such that $g(x_{i_0})=1$ and $g(x_i)=0$ for every $i\in\{1,\ldots,m\}\setminus\{i_0\}$. For every $(x,y)\in X\times Y$ we put: $F(x,y)=g(x)\cdot f(x_{i_0},y)$. 
  It follows that $F\in SC(X\times Y)=C(X\times Y,\sigma)$, $F(x_{i_0},\cdot)\equiv f(x_{i_0},\cdot)$, and $F(x_i,\cdot)\equiv 0$ for every $i\in\{1,\ldots,m\}\setminus\{i_0\}$. Since $(\varphi_{n_k})_{k\in\omega}$ is also a JN-sequence on $(X\times Y,\sigma)$ and $\mathcal{S}\subseteq\bigcup_{i=1}^m\{x_i\}\times Y$, we get that $|\psi_k(f)|\le |\varphi_{n_k}(F)|\cdot m\to 0$ 
  as $k\to\infty$. This proves that $(\psi_k)_{k\in\omega}$ is a JN-sequence on $\{x_{i_0}\}\times Y$---it follows easily that $C_p(Y)$ has the JNP, too.

  If $\mathcal{S}$ is contained in finitely many horizontal sections of the product $X\times Y$, then we proceed similarly and prove that $C_p(X)$ has the JNP.

  \medskip

  Let us assume now that, conversely, $C_p(X)$ has the JNP, that is, there is a JN-sequence $(\varphi_n)$ on $X$. For each $n\in\omega$ write: $\varphi_n=\sum_{i=1}^{k_n}\alpha_i^n\delta_{x_i^n}$. 
  Fix $y\in Y$ and define the functional $\psi_n$ on $C_p(X\times Y,\sigma)$ by the similar formula: $\psi_n=\sum_{i=1}^{k_n}\alpha_i^n\delta_{(x_i^n,y)}$. 
  We claim that $(\psi_n)_{n\in\omega}$ is a JN-sequence on $(X\times Y,\sigma)$. Of course, $\|\psi_n\|=1$ for every $n\in\omega$. Let $f\in C_p(X\times Y,\sigma)=SC_p(X\times Y)$. Then, the function $F\colon X\to\mathbb{R}$ defined for every $x\in X$ by $F(x)=f(x,y)$ is continuous on $X$ and thus $\lim_{n\to\infty}\varphi_n(F)=0$. Since for every $n\in\omega$ we have $\psi_n(f)=\varphi_n(F)$, we get that $\lim_{n\to\infty}\psi_n(f)=0$, too. It follows that the space $C_p(X\times Y,\sigma)$ has the JNP.

  We prove similarly that if $C_p(Y)$ has the JNP, then $C_p(X\times Y,\sigma)$ has the JNP.
\end{proof}

The first part of the proof suggests the following result.

\begin{proposition}
Let $X$ and $Y$ be infinite Tychonoff spaces. Let $(\varphi_n)_{n\in\omega}$ be a sequence in $L_p(X\times Y,\sigma)$ such that $\|\varphi_n\|=1$ for every $n\in\omega$. Set $\mathcal{S}=\bigcup_{n\in\omega}\supp(\varphi_n)$. If $\mathcal{S}$ is contained in neither finitely many vertical sections nor finitely many horizontal sections, then there exists $f\in SC(X\times Y)$ such that $\limsup_{n\to\infty}|\varphi_n(f)|>0$.
\end{proposition}

We will now study the Josefson--Nissenzweig property of the spaces $C_p(X,E)$ for $X$ Tychonoff and $E$ locally convex. Note that by \cite[Proposition~19]{fekanew} the topological dual space of $C_p(X,E)$ is algebraically isomorphic to the space $L_p(X)\otimes E'$, so each functional $\varphi\in C_p(X,E)'$ may be represented as a finite sum of tensor products 
$\sum_{i,j} \alpha_{i,j} (\delta_{x_i} \otimes e_j')$, 
where $x_i\in X$ and $e_j'\in E'$. Hence, if $E=C_p(Y)$ for some Tychonoff space $Y$, then each $\varphi\in C_p(X,E)'$ may be written as a finite sum $\sum_{i,j}\alpha_{i,j}(\delta_{x_i}\otimes\delta_{y_j})$. 
Let us define \textit{the norm} of $\varphi$ (in $L_p(X)\otimes L_p(Y)$) in the following natural way: $\|\varphi\|=\sum_{i,j}|\alpha_{i,j}|$.

Let $X$ and $Y$ be Tychonoff spaces and assume that $C_p(X\times Y,\sigma)$ has the JNP, that is, by Theorem \ref{thm:bg_cpx_jnp}, there is a sequence $(\varphi_n)_{n\in\omega}$ in $L_p(X\times Y,\sigma)$ such that $\varphi_n(f)\to0$ for every $f\in C_p(X\times Y,\sigma)$ and $\|\varphi_n\|=1$ for every $n\in\omega.$ For each $n\in\omega$ write: $\varphi_n=\sum_{i=1}^{k_n}\alpha_i^n\delta_{(x_i^n,y_i^n)}$. 
By Theorem \ref{thm:sepcont_iso}, the operator $A\colon C_p(X\times Y,\sigma)\to C_p(X,C_p(Y))$ is an isomorphism, so the sequence $(\psi_n)_{n\in\omega}$ given for every $n\in\omega$ by the formula:
\[
  \psi_n=A'(\varphi_i) = \sum_{i=1}^{k_n}\alpha_i^n((\delta_{x_i^n}\otimes\delta_{y_i^n})\circ A),
\]
where $A'$ is the transpose of $A$, is weakly$^*$ convergent to $0$ (see \cite[Theorem 21.5]{KN}). Obviously, $\|\psi_n\|=1$ for every $n\in\omega$.

Let us now assume that, conversely, the space $C_p(X,C_p(Y))$ admits a weak$^*$ null sequence $(\psi_n)_{n\in\omega}$ in $L_p(X)\otimes L_p(Y)$ such that $\|\psi_n\|=1$ for every $n\in\omega$. By a very similar argument as above (this time applied to the transpose of $A^{-1}$), one may obtain a JN-sequence on the space $(X\times Y,\sigma)$. This way we obtain the following

\begin{proposition}\label{prop:jnp_cpxcpy}
For every Tychonoff spaces $X$ and $Y$ the space $C_p(X\times Y,\sigma)$ has the JNP if and only if there exists a sequence $(\psi_n)_{n\in\omega}$ in the dual $L_p(X)\otimes L_p(Y)$ which is weakly$^*$ convergent to $0$ and such that $\|\psi_n\|=1$ for every $n\in\omega$.
\end{proposition}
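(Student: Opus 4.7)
The plan is to move the JNP across the isomorphism $A\colon C_p(X\times Y,\sigma)\to C_p(X,C_p(Y))$ from Theorems~\ref{thm:sepcont_iso} and~\ref{thm:cpxysigma_cpxcpy} by working with its transpose at the level of duals, and then to invoke Theorem~\ref{thm:bg_cpx_jnp} (applied to the Tychonoff space $(X\times Y,\sigma)$) to identify the JNP of $C_p(X\times Y,\sigma)$ with the existence of a weak$^*$ null, norm-$1$ sequence in $L_p(X\times Y,\sigma)$, i.e.\ a JN-sequence on $(X\times Y,\sigma)$.

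The central computation is the behaviour of the transpose $A'$ on elementary tensors: for every $x\in X$, $y\in Y$ and every $f\in C_p(X\times Y,\sigma)$,
\[A'(\delta_x\otimes\delta_y)(f)=(\delta_x\otimes\delta_y)(A(f))=A(f)(x)(y)=f(x,y)=\delta_{(x,y)}(f),\]
so $A'(\delta_x\otimes\delta_y)=\delta_{(x,y)}$, and by linearity $A'$ sends $\sum_{i,j}\alpha_{i,j}(\delta_{x_i}\otimes\delta_{y_j})$ to $\sum_{i,j}\alpha_{i,j}\delta_{(x_i,y_j)}$. Arranging representatives so that the $x_i$'s and the $y_j$'s are (separately) distinct, so that the pairs $(x_i,y_j)$ are pairwise distinct in $X\times Y$, both the norm on $L_p(X)\otimes L_p(Y)$ defined just above the proposition and the norm on $L_p(X\times Y,\sigma)$ from Section~2 evaluate to $\sum_{i,j}|\alpha_{i,j}|$. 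Conversely, any $\varphi=\sum_{k}\beta_k\delta_{(a_k,b_k)}\in L_p(X\times Y,\sigma)$ with distinct $(a_k,b_k)$ can be regrouped by listing the distinct $x$- and $y$-coordinates occurring among the $a_k$'s and $b_k$'s and defining $\alpha_{i,j}=\beta_k$ whenever $(a_k,b_k)=(x_i,y_j)$ (and $\alpha_{i,j}=0$ otherwise), producing a preimage in $L_p(X)\otimes L_p(Y)$ of the same norm $\sum_{i,j}|\alpha_{i,j}|=\sum_k|\beta_k|$.

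Thus $A'$ is a norm-preserving algebraic isomorphism of the two duals, and as the transpose of a topological isomorphism it is simultaneously a weak$^*$-weak$^*$ homeomorphism, so it maps weak$^*$ null, norm-$1$ sequences in $L_p(X)\otimes L_p(Y)$ bijectively onto JN-sequences on $(X\times Y,\sigma)$. Combined with Theorem~\ref{thm:bg_cpx_jnp}, this gives the claimed equivalence. The only technical point that will need some care is the regrouping argument verifying that the two different-looking norm definitions---a rectangular sum indexed by a product of two finite sets versus an arbitrary sum over a finite set of points in $X\times Y$---agree under $A'$; this is purely combinatorial, but must be stated cleanly so that the norm bookkeeping is unambiguous.
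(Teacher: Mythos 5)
Your proposal is correct and takes essentially the same route as the paper: there, too, one transfers JN-sequences on the Tychonoff space $(X\times Y,\sigma)$ through the transpose of the isomorphism $A$ of Theorem \ref{thm:sepcont_iso}, with Theorem \ref{thm:bg_cpx_jnp} supplying the identification of the JNP of $C_p(X\times Y,\sigma)$ with the existence of such sequences, and weak$^*$ convergence preserved because the transpose of an isomorphism is a weak$^*$-weak$^*$ homeomorphism. Your explicit check that $A'(\delta_x\otimes\delta_y)=\delta_{(x,y)}$ and the regrouping argument for the equality of the two $\ell_1$-type norms is precisely the bookkeeping the paper compresses into ``Obviously, $\|\psi_n\|=1$''.
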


Since the Josefson--Nissenzweig property for locally convex spaces is preserved under topological isomorphisms, Proposition \ref{prop:jnp_cpxcpy} and Theorem \ref{thm:bg_cpx_jnp} yield the following characterisation of the property for $C_p(X,C_p(Y))$-spaces.

\begin{proposition}\label{prop:cpxcpy_jnp_seq}
  For every Tychonoff space $X$ and $Y$ the space $C_p(X,C_p(Y))$ has the JNP if and only if there exists a sequence $(\psi_n)_{n\in\omega}$ in $L_p(X)\otimes L_p(Y)$ such that $\|\psi_n\|=1$ for every $n\in\omega$ and $\psi_n(f)\to0$ for every $f\in C_p(X,C_p(Y))$.
\end{proposition}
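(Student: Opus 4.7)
The plan is to derive Proposition \ref{prop:cpxcpy_jnp_seq} as a direct consequence of Proposition \ref{prop:jnp_cpxcpy} combined with the linear homeomorphism $A\colon SC_p(X\times Y)\to C_p(X,C_p(Y))$ furnished by Theorem \ref{thm:sepcont_iso} (equivalently, Theorem \ref{thm:cpxysigma_cpxcpy}). Since the JNP is an isomorphic invariant of locally convex spaces, as recalled at the beginning of Section \ref{sec:jnp} (the transpose of a topological isomorphism is a topological isomorphism for both the weak$^*$ and the $\beta^*$ topologies), the space $C_p(X,C_p(Y))$ has the JNP if and only if $C_p(X\times Y,\sigma)$ does. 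Invoking Proposition \ref{prop:jnp_cpxcpy}, the latter is in turn equivalent to the existence of a sequence $(\psi_n)_{n\in\omega}$ in $L_p(X)\otimes L_p(Y)$, weak$^*$ null with respect to the natural duality $\langle L_p(X)\otimes L_p(Y),\, C_p(X,C_p(Y))\rangle$, and normalised by $\|\psi_n\|=1$ for every $n$.

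The only remaining point is to unravel what the weak$^*$ convergence condition says explicitly. Recall from the paragraph preceding Proposition \ref{prop:jnp_cpxcpy} that, via \cite[Proposition 19]{fekanew}, the dual $C_p(X,C_p(Y))'$ is identified with $L_p(X)\otimes L_p(Y)$ through the action $(\delta_x\otimes\delta_y)(g) = g(x)(y)$, so that a generic $\psi=\sum_{i,j}\alpha_{i,j}(\delta_{x_i}\otimes\delta_{y_j})$ satisfies $\psi(g)=\sum_{i,j}\alpha_{i,j}g(x_i)(y_j)$. By the very definition of the weak$^*$ topology on this dual, a sequence $(\psi_n)$ is weak$^*$ null precisely when $\psi_n(f)\to 0$ for every $f\in C_p(X,C_p(Y))$, which is exactly the condition stated in the proposition. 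The compatibility between the two $\|\cdot\|$-norms (the one on $L_p(X\times Y,\sigma)$ and the one on $L_p(X)\otimes L_p(Y)$) appears implicitly in Proposition \ref{prop:jnp_cpxcpy} itself, since distinct points $(x,y)\in X\times Y$ produce linearly independent simple tensors $\delta_x\otimes\delta_y$, so no ambiguity arises when rewriting $\sum_k\alpha_k\delta_{(x_k,y_k)}\in L_p(X\times Y,\sigma)$ as $\sum_k\alpha_k(\delta_{x_k}\otimes\delta_{y_k})\in L_p(X)\otimes L_p(Y)$.

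I do not anticipate any genuine obstacle here: the proof is essentially a bookkeeping exercise that transports Proposition \ref{prop:jnp_cpxcpy} across the identifications $C_p(X\times Y,\sigma)\approx C_p(X,C_p(Y))$ and $L_p(X\times Y,\sigma)\cong L_p(X)\otimes L_p(Y)$. The one place where a reader might want extra care is the verification that the transpose $A'$ (or rather $(A^{-1})'$) does carry $\delta_{(x,y)}\in L_p(X\times Y,\sigma)$ to $\delta_x\otimes\delta_y\in L_p(X)\otimes L_p(Y)$ and preserves $\|\cdot\|$, but this follows directly from the defining formula $A(f)(x)(y)=f(x,y)$ together with the previous paragraph's linear-independence remark.
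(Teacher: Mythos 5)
Your proposal is correct and follows essentially the same route as the paper, which likewise obtains Proposition \ref{prop:cpxcpy_jnp_seq} without a separate argument, by combining Proposition \ref{prop:jnp_cpxcpy} with the isomorphism $C_p(X,C_p(Y))\approx C_p(X\times Y,\sigma)$ and the isomorphism-invariance of the JNP, and by reading weak$^*$ convergence in $L_p(X)\otimes L_p(Y)=C_p(X,C_p(Y))'$ as $\psi_n(f)\to 0$ for all $f\in C_p(X,C_p(Y))$. Your extra remarks on the transpose of $A$ and on the well-definedness of the norm via linear independence of the tensors $\delta_x\otimes\delta_y$ only make explicit what the paper leaves implicit.
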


The following corollary is an immediate consequence of Theorems \ref{thm:cpxysigma_jnp} and \ref{thm:cpxysigma_cpxcpy}.

\begin{corollary}
  For every infinite Tychonoff spaces $X$ and $Y$, the space $C_p(X,C_p(Y))$ has the JNP if and only if $C_p(X)$ has the JNP or $C_p(Y)$ has the JNP.
\end{corollary}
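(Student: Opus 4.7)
The plan is to derive this corollary as an immediate consequence of two results already proved in the excerpt. First, Theorem \ref{thm:cpxysigma_cpxcpy} gives a linear homeomorphism between $C_p(X,C_p(Y))$ and $C_p(X\times Y,\sigma)$, where $\sigma$ denotes the weak topology on $X\times Y$ generated by all separately continuous real-valued functions. Second, Theorem \ref{thm:cpxysigma_jnp} characterises exactly when $C_p(X\times Y,\sigma)$ has the JNP in terms of the JNP of $C_p(X)$ and $C_p(Y)$.

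The bridge between these two facts is the observation already noted at the beginning of Section \ref{sec:jnp}: the JNP is an isomorphic property of locally convex spaces, since the transpose of a linear homeomorphism is a homeomorphism for both the weak$^\ast$ topology and the topology $\beta^\ast$, so sequential continuity (or its failure) of the identity on the dual is preserved (cf.\ Corollary 8.6.6 in \cite{Jarchow}). Thus the chain of equivalences
\[
C_p(X,C_p(Y))\text{ has the JNP}\iff C_p(X\times Y,\sigma)\text{ has the JNP}\iff C_p(X)\text{ or }C_p(Y)\text{ has the JNP}
\]
follows, where the first equivalence uses Theorem \ref{thm:cpxysigma_cpxcpy} together with the isomorphism invariance of the JNP, and the second is exactly Theorem \ref{thm:cpxysigma_jnp} (which requires $X$ and $Y$ to be infinite).

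There is really no obstacle here; the entire content of the corollary has been established in the preceding two theorems. If one wanted to bypass the reference to the isomorphism invariance of the JNP, one could instead invoke Proposition \ref{prop:cpxcpy_jnp_seq} together with the first part of the proof of Theorem \ref{thm:cpxysigma_jnp}: a weakly$^\ast$ null sequence of norm one in $L_p(X)\otimes L_p(Y)$ is literally the same object as a JN-sequence on $(X\times Y,\sigma)$ (under the identification induced by the transpose of $A$ from Theorem \ref{thm:sepcont_iso}), and then the support-analysis via \cite[Corollary 5.9.a]{HW} and the Tychonoff separation argument from the proof of Theorem \ref{thm:cpxysigma_jnp} produces, respectively extracts, a JN-sequence on one of the factors. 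Either route gives a two-line proof.
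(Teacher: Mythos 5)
Your proposal is correct and matches the paper's own route: the paper also obtains this corollary immediately from Theorem \ref{thm:cpxysigma_jnp} combined with the isomorphism $C_p(X,C_p(Y))\approx C_p(X\times Y,\sigma)$ of Theorem \ref{thm:cpxysigma_cpxcpy} and the isomorphism invariance of the JNP noted at the start of Section \ref{sec:jnp}. Nothing further is needed.
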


For general locally convex spaces $E$, we still have one of the implications observed in the above corollary.

\begin{proposition}\label{JNPCpXE}
  Let $X$ be a Tychonoff space and let $E$ be a locally convex space. If   $C_p(X)$ has the JNP or $E$ has the JNP, then the same is true for the space $C_p(X,E)$.
\end{proposition}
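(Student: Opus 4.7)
The plan is to lift a JNP-witness from $C_p(X)$ (or from $E$) to a weak$^*$ null but not $\beta^*$-null sequence in $(C_p(X,E))'$, using the tensor-product representation $(C_p(X,E))' \cong L_p(X) \otimes E'$ of \cite[Proposition~19]{fekanew} recorded above. Recall that a locally convex space $F$ has the JNP precisely when its dual admits a weak$^*$ null sequence which fails to be $\beta^*(F',F)$-null, and, unpacking the description of $\beta^*$ as uniform convergence on barrel-bounded sets, this failure just asserts the existence of a barrel-bounded $B \subseteq F$, an $\varepsilon > 0$, and infinitely many indices $n$ with $\sup_{b\in B}|f'_n(b)| \ge \varepsilon$. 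The one auxiliary observation I will use is the easy fact that continuous linear maps carry barrel-bounded sets to barrel-bounded sets: if $T\colon F_1 \to F_2$ is continuous linear and $V$ is a barrel in $F_2$, then $T^{-1}(V)$ is a barrel in $F_1$ (closedness, balancedness, convexity and absorption all pull back through $T$), so $A$ being barrel-bounded forces $T(A)$ to be absorbed by every such $V$.

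Case 1: $C_p(X)$ has the JNP. Fix a weak$^*$ null $(\varphi_n) \subseteq L_p(X)$ and a barrel-bounded $A \subseteq C_p(X)$ witnessing $\beta^*$-non-nullness. Assuming $E \neq \{0\}$ (the statement being vacuous otherwise), Hahn--Banach provides $e'_0 \in E'$ and $e_0 \in E$ with $e'_0(e_0) = 1$. Set $\psi_n := \varphi_n \otimes e'_0 \in L_p(X) \otimes E'$. For any $f \in C_p(X,E)$ the composition $e'_0 \circ f$ lies in $C_p(X)$, so $\psi_n(f) = \varphi_n(e'_0 \circ f) \to 0$, giving weak$^*$ convergence. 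To show that $(\psi_n)$ is not $\beta^*$-null I introduce the continuous linear map $T\colon C_p(X) \to C_p(X,E)$ defined by $T(g)(x) = g(x)\,e_0$; since $\psi_n(T(g)) = \varphi_n(g)$, the barrel-bounded set $T(A) \subseteq C_p(X,E)$ inherits the non-vanishing suprema from $(\varphi_n, A)$.

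Case 2: $E$ has the JNP. Fix a weak$^*$ null $(e'_n) \subseteq E'$ and a barrel-bounded witness $B_0 \subseteq E$. Pick any $x_0 \in X$ and let $\psi_n := \delta_{x_0} \otimes e'_n$. Then $\psi_n(f) = e'_n(f(x_0)) \to 0$ for every $f \in C_p(X,E)$, so $(\psi_n)$ is weak$^*$ null. The constant-function embedding $S\colon E \to C_p(X,E)$, $S(y)(x) \equiv y$, is continuous linear, so $S(B_0)$ is barrel-bounded in $C_p(X,E)$; and the identity $\psi_n(S(y)) = e'_n(y)$ shows $(\psi_n)$ is not $\beta^*$-null.

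I do not anticipate any serious obstacle: the only step requiring even mild care is the preservation of barrel-boundedness under $T$ and $S$, handled uniformly by the preimage-of-a-barrel remark in the opening paragraph. Both sub-arguments then reduce to routine calculations on rank-one tensor functionals.
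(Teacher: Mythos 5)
Your proof is correct and takes essentially the same route as the paper: lift the witnessing sequence via rank-one tensors ($\varphi_n\otimes e_0'$, resp.\ $\delta_{x_0}\otimes e_n'$) and transport the barrel-bounded witness through the natural copy of $C_p(X)$, resp.\ $E$, inside $C_p(X,E)$. The only cosmetic difference is in how barrel-boundedness of the image is justified: you use the (correct) preimage-of-a-barrel observation for a continuous linear map, while the paper intersects barrels of $C_p(X,E)$ with the complemented subspace $f\otimes E\approx E$; both reduce to the same computation.
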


\begin{proof}
  Assume that $E$ has the Josefson--Nissenzweig property. This allows us to pick a weak$^*$ null sequence $(\varphi_n)_{n\in\omega}$ in $E'$ and a barrel-bounded set $B\subset E$ on which it does not converge to zero uniformly. Let $f\in C_p(X)$ and $x\in X$ be such that $f(x)=1$. For each $n\in\omega$ set $\psi_n = \delta_x \otimes \varphi_n$; of course, $\psi_n\in L_p(X)\otimes E'$. Note that for every $g\in C_p(X,E)$ we have $\langle \psi_n, g\rangle = \langle \varphi_n, g(x) \rangle \to 0$ 
  as $n\to\infty$, so $(\psi_n)_{n\in\omega}$ is a weak$^*$ null sequence in $(C_p(X,E))'$.

We now exhibit a barrel-bounded subset of $C_p(X,E)$ on which $(\psi_n)_{n\in\omega}$ does not converge uniformly. The tensor product $C_p(X)\otimes E$ embedds (as a vector space) into the space $C_p(X,E)$ by the formula
\[C_p(X)\otimes E\ni\sum_if_i\otimes e_i\longmapsto\big[y\mapsto\sum_if_i(y)e_i\big]\in C_p(X,E).\]
Thus, the subspace of $C_p(X)\otimes E$ given by the formula $f\otimes E=\{f\otimes e\colon\ e\in E\}$ 
may be thought of as a subspace of $C_p(X,E)$. Moreover, by \cite[Proposition I.4.3.(a)]{Schmets}, $f\otimes E$ is complemented (hence closed) in $C_p(X,E)$---a continuous linear projection is given by the mapping $g\mapsto f\otimes g(x)$ since $f(x)=1$ by choice of $f$ and $x$. Note that $f\otimes E$ is  isomorphic to $E$. For every barrel $H\subset C_p(X,E)$ the intersection $H\cap f\otimes E$ is also a barrel in $f\otimes E$. By the assumption, $H\cap f\otimes E$ absorbs $f\otimes B$. This shows that $f\otimes B$ is absorbed by every barrel in $C_p(X,E)$, which means that it is barrel-bounded. Moreover, since $f(x)=1$, we have:
\[
    \sup_{g\in f\otimes B} |\langle\psi_n, g\rangle| =  \sup_{e\in B} |\langle\delta_x\otimes\varphi_n, f\otimes e\rangle| = \sup_{e\in B} |\langle \varphi_n,  f(x)e\rangle| = \sup_{e\in B} |\langle \varphi_n, e\rangle| \not\to 0
  \]
  for $n\to\infty$, which shows that $(\psi_{n})_{n\in\omega}$ is a sequence witnessing the JNP of $C_p(X,E)$.

The proof for the case when $C_p(X)$ has the JNP is analogous.
\end{proof}

We do not know whether the converse to Proposition \ref{JNPCpXE} holds.

\begin{problem}
Let $X$ be a Tychonoff space and $E$ a locally convex space. Assume that the space $C_p(X,E)$ has the JNP. Does it follow that $C_p(X)$ has the JNP or $E$ has the JNP?
\end{problem}

Having in mind Theorem \ref{thm:bg_cpx_jnp} and Proposition \ref{prop:cpxcpy_jnp_seq}, we also ask the following.

\begin{problem}
Can the Josefson--Nissenzweig property of the spaces $C_p(X,E)$ for $X$ Tychonoff and $E$ an arbitrary locally convex space be characterised in terms of some special ``JN-sequences'' in $L_p(X)\otimes E'$, like it is done in the case of $E=C_p(Y)$ for $Y$ Tychonoff?
\end{problem}

\section{Complemented copies of $(c_{0})_{p}$ in $C_p(X,C_p(Y))$\label{sec:c0p_cpxcpy}}

The following theorem of Banakh, K\k{a}kol and \'{S}liwa characterizes in terms of the complemented copies of the space $(c_0)_p$ those infinite Tychonoff spaces $X$ for which the space $C_p(X)$ has the JNP.

\begin{theorem}[{\cite[Theorem 1]{BKS}}]\label{thm:bks}
  Let $X$ be an infinite Tychonoff space. Then, the following conditions are equivalent:
  \begin{enumerate}
  \item $C_p(X)$ has the JNP;
  \item $C_p(X)$ contains a complemented copy of $(c_0)_p$;
  \item $C_p(X)$ admits a continuous linear surjection onto $(c_0)_p$.
  \end{enumerate}
\end{theorem}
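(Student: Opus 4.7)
The plan is to establish the cycle $(2)\Rightarrow(3)\Rightarrow(1)\Rightarrow(2)$. The implication $(2)\Rightarrow(3)$ is immediate: the projection of $C_p(X)$ onto its complemented copy of $(c_0)_p$, composed with the identifying isomorphism, is the required continuous linear surjection. For $(3)\Rightarrow(1)$, I would take a continuous linear surjection $T\colon C_p(X)\to(c_0)_p$ and consider the functionals $\psi_n=e_n^\ast\circ T\in L_p(X)$, where the $e_n^\ast$ are the coordinate evaluations on $(c_0)_p$. Since $T(f)\in(c_0)_p$, one has $\psi_n(f)=T(f)(n)\to 0$ for every $f\in C_p(X)$, so $(\psi_n)$ is weak$^\ast$ null, and picking $f_n\in C_p(X)$ with $T(f_n)=e_n$ yields $\psi_n(f_n)=1$, so the $\psi_n$ are nonzero. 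A subsequence-plus-normalization argument (glossing technicalities about uniform bounds on $\|\psi_n\|$, for which one passes to suitable subsequences or exploits the structure of $T$) then delivers an honest JN-sequence on $X$, whence $C_p(X)$ has the JNP by Theorem~\ref{thm:bg_cpx_jnp}.

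The crux of the theorem is $(1)\Rightarrow(2)$. Start with a JN-sequence $(\varphi_n)$ on $X$ given by $\varphi_n=\sum_i\alpha_i^n\delta_{x_i^n}$ with $\|\varphi_n\|=1$, and proceed in three phases. First, reduce to pairwise disjoint supports $S_n=\supp(\varphi_n)$: apply a $\Delta$-system extraction to the finite sets $S_n$ to obtain a common root $R$, subtract from each $\varphi_n$ its restriction to $R$ (whose contribution converges along a further subsequence by a compactness argument in the finite-dimensional space $\mathbb{R}^R$), and renormalize, preserving the JN property and yielding disjoint supports with norms bounded below. Second, use the Tychonoff property: the $S_n$ are finite and pairwise disjoint, so one can pick pairwise disjoint open sets $U_n\supseteq S_n$ and continuous $f_n\colon X\to[-1,1]$ with $f_n(x)=\sgn(\alpha_x^n)$ on $S_n$ and $\supp f_n\subseteq U_n$. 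Then $\varphi_n(f_n)=\|\varphi_n\|$ and $\varphi_m(f_n)=0$ for $m\neq n$, and after rescaling one obtains the biorthogonality $\varphi_n(f_m)=\delta_{nm}$.

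The third phase constructs the complemented copy. Define $\Phi\colon(c_0)_p\to C_p(X)$ by $\Phi(a)(x)=\sum_n a(n)f_n(x)$ (a sum with at most one nonzero term per $x$, by disjointness of the $U_n$) and $\pi\colon C_p(X)\to C_p(X)$ by $\pi(g)(x)=\sum_n \varphi_n(g)f_n(x)$. Linearity and biorthogonality give $\pi\circ\Phi=\mathrm{id}_{(c_0)_p}$ at once, and continuity of $\pi(g)$ at any $x$ uses only the finitely many values of $g$ on the single $S_n$ with $x\in U_n$ (if any). The main obstacle is continuity of $\Phi(a)$ at points $x\in\overline{\bigcup_n U_n}\setminus\bigcup_n U_n$: for $y_k\to x$ with $y_k\in U_{n_k}$, the value $\Phi(a)(y_k)=a(n_k)f_{n_k}(y_k)$ tends to $0$ automatically when $n_k\to\infty$ (since $a\in(c_0)_p$), but if $n_k$ has a bounded subsequence one needs $f_n$ to vanish near $x$ for the relevant $n$. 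This forces one to arrange the family $\{U_n\}$ to be locally finite in $X$, which is the essential technical step and likely requires a further refinement of the JN-sequence, exploiting that an accumulation point of the supports would, via a suitable continuous bump function, contradict $\varphi_n(f)\to 0$. Once local finiteness is secured, $\Phi$ becomes a closed embedding onto a subspace isomorphic to $(c_0)_p$ and $\pi$ is a continuous projection onto it, completing the construction of the complemented copy.
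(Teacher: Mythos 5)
Your overall architecture for $(1)\Rightarrow(2)$ (biorthogonal measures and disjointly supported bounded bumps, with $\Phi(a)=\sum_n a(n)f_n$ and $P(g)=(\varphi_n(g))_n$ giving the projection $\Phi\circ P$) is the right one -- note the paper does not prove this theorem but cites it from [BKS] -- but two of your steps fail as stated. The $\Delta$-system reduction of Phase 1 is wrong: the root coefficients converge along a subsequence, but not necessarily to zero, so subtracting $\varphi_n\restriction R$ destroys weak$^*$-nullity. Concretely, for $X=S$ and the JN-sequence $\varphi_n=\frac{1}{2}(\delta_{1/n}-\delta_0)$ the root is $R=\{0\}$, and after subtraction and renormalization you are left with $\delta_{1/n}$, which is not weak$^*$ null (test against $f\equiv 1$). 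The cancellation between the root part and the moving part is exactly what makes typical JN-sequences null, so it cannot be discarded; the workable substitute is to keep the original $\varphi_n$ and instead force the functions $f_m$ to vanish on $R$, after first showing (using bounded functions separating the finitely many points of $R$) that the non-root masses $\|\varphi_n\restriction(S_n\setminus R)\|$ stay bounded below along a subsequence. Phase 2 has a second genuine gap: pairwise disjoint finite sets in a Tychonoff (even compact metric) space need not admit pairwise disjoint open neighbourhoods $U_n\supseteq S_n$ -- take $S_1=\{0\}$ and $S_n=\{1/n\}$ in $S$: every neighbourhood of $0$ meets all but finitely many $S_n$ -- so ``pick pairwise disjoint open sets'' requires an argument exploiting the structure of the sequence, not just complete regularity.

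In $(3)\Rightarrow(1)$ the point you gloss over is the entire content of the implication: if $\|\psi_n\|\to 0$, normalizing destroys weak$^*$-nullity, and your preimages $f_n$ with $T(f_n)=e_n$ give no lower bound on $\|\psi_n\|$, since $f_n$ may be enormous on $\supp\psi_n$. Surjectivity must be used seriously here; for instance, if every $f\in C(X)$ is bounded on $\bigcup_n\supp\psi_n$, then solving $T(f)=(\sqrt{\|\psi_n\|})_n$ forces $\sup_{x\in\supp\psi_n}|f(x)|\geq 1/\sqrt{\|\psi_n\|}$, whence $\limsup_n\|\psi_n\|>0$; the case of a function unbounded on the supports needs a separate argument, and none of this is in your sketch. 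Finally, one remark in your favour: the ``main obstacle'' you identify in Phase 3 is not an obstacle. With $\|f_n\|_\infty\leq 1$ and pairwise disjoint supports, the partial sums of $\sum_n a(n)f_n$ converge uniformly (at each point at most one term of the tail survives), so $\Phi(a)$ is continuous with no local finiteness of $\{U_n\}$ required; but this observation does not repair the gaps in Phase 1, Phase 2, or the normalization step.
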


\noindent This theorem, together with Theorems \ref{thm:cpxysigma_cpxcpy} and  \ref{thm:cpxysigma_jnp} and the fact that the JNP is preserved under isomorphisms, implies the following criterion for a given space $C_p(X,C_p(Y))$ to contain a complemented copy of $(c_0)_p$. Of course, by Theorem \ref{thm:cpxysigma_cpxcpy}, the following result will remain true if we replace each occurrence of $C_p(X,C_p(Y))$ by $C_p(X\times Y,\sigma)$.

\begin{theorem}\label{thm:cpxcpy_cps_compl}
  Let $X$ and $Y$ be infinite Tychonoff spaces. Then, the following conditions are equivalent:
  \begin{enumerate}
  \item $C_p(X,C_p(Y))$ has the JNP;
  \item $C_p(X,C_p(Y))$ contains a complemented copy of $(c_0)_p$;
  \item $C_p(X,C_p(Y))$ admits a continuous linear surjection onto $(c_0)_p$;
  \item $C_p(X)$ contains a complemented copy of $(c_0)_p$ or $C_p(Y)$ contains a complemented copy of $(c_0)_p$;
  \item $C_p(X)$ admits a continuous linear surjection onto $(c_0)_p$ or $C_p(Y)$ admits a continuous linear surjection onto $(c_0)_p$;
  \item $C_p(X)$ has the JNP or $C_p(Y)$ has the JNP.
  \end{enumerate}
\end{theorem}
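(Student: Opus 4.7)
The plan is to assemble the equivalences by stacking three earlier results: Theorem \ref{thm:bks} (applied to three different Tychonoff spaces), Theorem \ref{thm:cpxysigma_cpxcpy} (the isomorphism $C_p(X,C_p(Y)) \approx C_p(X\times Y,\sigma)$), and Theorem \ref{thm:cpxysigma_jnp} (the product criterion for the JNP). No new construction of functionals or sequences is needed; everything has been prepared.

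First I would observe that by \cite[Theorem 4.8]{HW}, the space $Z := (X\times Y,\sigma)$ is Tychonoff, and it is infinite since $X$ and $Y$ are. Thus Theorem \ref{thm:bks} applies to the space $C_p(Z)$ and yields the equivalence of: $C_p(Z)$ has the JNP; $C_p(Z)$ contains a complemented copy of $(c_0)_p$; $C_p(Z)$ admits a continuous linear surjection onto $(c_0)_p$. Transporting these statements along the isomorphism $A\colon C_p(X\times Y,\sigma)\to C_p(X,C_p(Y))$ of Theorem \ref{thm:cpxysigma_cpxcpy} (and using that both the JNP and the existence of complemented copies or continuous linear surjections onto a fixed space are preserved under linear homeomorphisms), I obtain the equivalence of conditions (1), (2), (3).

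Next, I would apply Theorem \ref{thm:bks} separately to $X$ and to $Y$: this immediately gives the equivalence of conditions (4), (5), (6), simply by taking the disjunction of the two equivalences ``$C_p(X)$ has the JNP $\Leftrightarrow$ $C_p(X)$ contains a complemented copy of $(c_0)_p$ $\Leftrightarrow$ $C_p(X)$ surjects onto $(c_0)_p$'' and the analogous one for $Y$.

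Finally, I would close the loop by proving (1) $\Leftrightarrow$ (6). The isomorphism $A$ of Theorem \ref{thm:cpxysigma_cpxcpy} and the invariance of the JNP under isomorphisms give that $C_p(X,C_p(Y))$ has the JNP iff $C_p(X\times Y,\sigma)$ has the JNP, and Theorem \ref{thm:cpxysigma_jnp} then tells us that the latter is equivalent to the disjunction ``$C_p(X)$ has the JNP or $C_p(Y)$ has the JNP'', which is exactly (6). Putting the three groups of equivalences together, all six conditions coincide. There is really no main obstacle at this stage, since the heavy lifting was done in Section \ref{sec:cpxy_vs_cpxcpy} (the isomorphism with $C_p(X\times Y,\sigma)$), in Section \ref{sec:jnp} (the product characterisation of the JNP), and in \cite{BKS} (the characterisation of the JNP for $C_p(Z)$ in terms of complemented copies of $(c_0)_p$); the only point to watch is that one must apply Theorem \ref{thm:bks} to the Tychonoff space $Z=(X\times Y,\sigma)$, not to $X\times Y$ with its product topology, so that $C_p(X,C_p(Y))$ itself falls within the scope of that theorem.
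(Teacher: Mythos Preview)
Your proposal is correct and matches the paper's approach exactly: the paper derives the theorem directly from Theorem~\ref{thm:bks} (applied to $X$, $Y$, and $(X\times Y,\sigma)$), Theorem~\ref{thm:cpxysigma_cpxcpy}, Theorem~\ref{thm:cpxysigma_jnp}, and the invariance of the JNP under isomorphisms. Your extra care in noting that $(X\times Y,\sigma)$ is an infinite Tychonoff space (so that Theorem~\ref{thm:bks} applies) is a welcome clarification.
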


\begin{corollary}\label{cor:cpx_c0p_cpxcpx}
  For any infinite Tychonoff space $X$, the space $C_p(X)$ contains a complemented copy of $(c_0)_p$ if and only if $C_p(X,C_p(X))$ contains a complemented copy of $(c_0)_p$.
\end{corollary}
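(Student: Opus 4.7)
The plan is immediate: this corollary is simply the special case $Y = X$ of Theorem \ref{thm:cpxcpy_cps_compl}, specifically the equivalence of its conditions~(2) and~(4). Setting $Y = X$ in condition~(4), the disjunction ``$C_p(X)$ contains a complemented copy of $(c_0)_p$ or $C_p(Y)$ contains a complemented copy of $(c_0)_p$'' collapses to the single statement ``$C_p(X)$ contains a complemented copy of $(c_0)_p$.'' Condition~(2) becomes ``$C_p(X,C_p(X))$ contains a complemented copy of $(c_0)_p$.'' Thus the corollary asserts precisely the equivalence $(2)\Leftrightarrow(4)$ of Theorem~\ref{thm:cpxcpy_cps_compl} in the symmetric case $Y=X$.

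There is no real obstacle here, since all of the substantive work has been done upstream: Theorem \ref{thm:sepcont_iso} identifies $C_p(X,C_p(Y))$ with $SC_p(X\times Y)$, Theorem \ref{thm:cpxysigma_cpxcpy} reinterprets it as $C_p(X\times Y,\sigma)$, Theorem \ref{thm:cpxysigma_jnp} then reduces the JNP of the product space to the JNP of one of the factors, and finally the Banakh--K\k{a}kol--\'Sliwa characterisation (Theorem \ref{thm:bks}) converts the JNP into the existence of a complemented copy of $(c_0)_p$. The corollary is therefore obtained in one step by invoking Theorem \ref{thm:cpxcpy_cps_compl} with $Y$ replaced by $X$ and observing that both sides of the equivalence simplify as indicated. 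I would write:

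\begin{proof}
Apply Theorem \ref{thm:cpxcpy_cps_compl} with $Y=X$ and use the equivalence of conditions (2) and (4) therein.
\end{proof}
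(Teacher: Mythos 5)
Your proposal is correct and is exactly how the paper obtains this corollary: it is the special case $Y=X$ of Theorem \ref{thm:cpxcpy_cps_compl}, using the equivalence of conditions (2) and (4), with the disjunction in (4) collapsing to a single statement. No further argument is needed.
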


By \cite[Corollary 1.3]{KSZproc}, for any infinite compact space $X$ the space $C_p(X\times X)$ contains a complemented copy of $(c_0)_p$. Since $C_p(\beta\omega)$ does not have any complemented copies of $(c_0)_p$, see~\cite[Corollary~2]{BKS}, Theorem \ref{thm:cpxcpy_cps_compl} yields the following example. 

\begin{example}
  The space $C_p(\beta\omega,C_p(\beta\omega))$ does not have any complemented copy of $(c_0)_p$, even though the space $C_p(\beta\omega\times\beta\omega)$ does have such a copy.
\end{example}

As a corollary to Theorem \ref{thm:cpxcpy_cps_compl} we obtain also the following result providing a large class of counterexamples to Problem \ref{problem:cpxx_compl}.

\begin{corollary}\label{cor:cpx_not_jnp_cpxx_not_compl}
  If $X$ is such an infinite compact space that $C_p(X)$ does not have the JNP, then $C_p(X,C_p(X))$ does not contain any complemented copy of $C_p(X\times X)$.
\end{corollary}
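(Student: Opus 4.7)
The plan is to proceed by contradiction, chaining a complementation with the surjection-based characterisation in Theorem \ref{thm:cpxcpy_cps_compl}. The key observation is that having a complemented copy lets us compose projections with surjections onto $(c_0)_p$, and any continuous linear surjection from $C_p(X,C_p(X))$ onto $(c_0)_p$ forces $C_p(X)$ to have the JNP.

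More concretely, first I would invoke \cite[Corollary 11.5]{KSZ}, already recalled in the paragraph preceding Corollary \ref{cor:cpx_c0p_cpxcpx}, which states that for any infinite compact space $X$ the space $C_p(X\times X)$ contains a complemented copy of $(c_0)_p$. In particular, $C_p(X\times X)$ admits a continuous linear surjection onto $(c_0)_p$; call it $\rho$.

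Next, suppose toward a contradiction that $C_p(X,C_p(X))$ contains a complemented copy of $C_p(X\times X)$. By definition of a complemented copy, there exists a continuous linear projection $\pi\colon C_p(X,C_p(X))\to C_p(X,C_p(X))$ whose range is isomorphic to $C_p(X\times X)$. Identifying that range with $C_p(X\times X)$, the composition $\rho\circ\pi\colon C_p(X,C_p(X))\to (c_0)_p$ is a continuous linear surjection.

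Finally, applying Theorem \ref{thm:cpxcpy_cps_compl} with $Y=X$, the implication $(3)\Rightarrow(6)$ yields that $C_p(X)$ has the JNP (the only ``or'' alternative collapses since both sides coincide), contradicting the hypothesis. There is no real obstacle to overcome: once Theorem \ref{thm:cpxcpy_cps_compl} is in hand together with the KSZ result, the argument is just a two-line composition. The only point that requires a word of care is to observe that a complemented copy of $C_p(X\times X)$ inside $C_p(X,C_p(X))$ really does provide a continuous linear surjection $C_p(X,C_p(X))\twoheadrightarrow C_p(X\times X)$, which is immediate from the definition of complementation recalled in Section~2.
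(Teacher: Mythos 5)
Your proposal is correct and follows essentially the same route as the paper: both argue by contradiction from \cite[Corollary 11.5]{KSZ} and Theorem \ref{thm:cpxcpy_cps_compl}, the only cosmetic difference being that you pass through the surjection condition (3) by composing the projection with a surjection onto $(c_0)_p$, while the paper chains complemented copies via Corollary \ref{cor:cpx_c0p_cpxcpx} and Theorem \ref{thm:bks}.
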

\begin{proof}
  Assume that $C_p(X,C_p(X))$ contains a complemented copy of $C_p(X\times X)$. Since $X$ is compact, by \cite[Corollary 1.3]{KSZproc}, $C_p(X\times X)$ (and hence $C_p(X,C_p(X))$) contains a complemented copy of $(c_0)_p$. By Corollary \ref{cor:cpx_c0p_cpxcpx}, $C_p(X)$ also contains a complemented copy of $(c_0)_p$. By Theorem \ref{thm:bks}, $C_p(X)$ has the JNP, a contradiction.
\end{proof}

\begin{example}
$C_p(\beta\omega\times\beta\omega)$ is not complemented in $C_p(\beta\omega,C_p(\beta\omega))$.
\end{example}

If $X$ is a Tychonoff space and $\kappa$ a cardinal number, then we assume that $C_p(X)^\kappa$ is equipped with the standard product topology, that is, the topology given by basic open sets of the form $\prod_{\xi<\kappa}U_\xi$, where each $U_\xi$ is open in $C_p(X)$ and for all but finitely many $\xi<\kappa$ we have $U_\xi=C_p(X)$. Using Theorem \ref{thm:bks}, it is easy to see that for every infinite Tychonoff space $X$ the space $C_p(X)^2\approx C_p(X\sqcup X)$, where $X\sqcup X$ denotes the topological disjoint union of two copies of $X$, contains a complemented copy of $(c_0)_p$ if and only if $C_p(X)$ contains such a copy. The following proposition generalizes this result to any infinite power of $C_p(X)$.

\begin{proposition}
  Let $X$ be an infinite Tychonoff space and $\kappa$ an infinite cardinal number. Then, the space $C_p(X)^\kappa$ contains a complemented copy of $(c_0)_p$ if and only if $C_p(X)$ contains a complemented copy of $(c_0)_p$.
\end{proposition}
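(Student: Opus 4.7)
The plan is to convert the question into one about the Josefson--Nissenzweig property and to exploit the identification
\[ C_p(X)^\kappa \;\cong\; C_p\bigl(\textstyle\bigsqcup_{\xi<\kappa} X_\xi\bigr), \]
where $Y := \bigsqcup_{\xi<\kappa} X_\xi$ denotes the topological disjoint union of $\kappa$ copies $X_\xi$ of $X$; note $Y$ is again an infinite Tychonoff space, so Theorem~\ref{thm:bks} applies to both $X$ and $Y$. Hence it suffices to show that $C_p(Y)$ has the JNP if and only if $C_p(X)$ does.

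The implication from $C_p(X)$ to $C_p(X)^\kappa$ is immediate: the direct product decomposition $C_p(X)^\kappa \approx C_p(X)\times C_p(X)^{\kappa\setminus\{0\}}$ exhibits $C_p(X)$ as a complemented subspace, so any complemented $(c_0)_p$ in $C_p(X)$ gives one in the product.

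For the converse, I would assume $C_p(Y)$ has the JNP and fix a JN-sequence $(\varphi_n)_{n\in\omega}$ on $Y$. First I would observe that the set $\mathcal{S}:=\bigcup_{n\in\omega}\supp(\varphi_n)$ is functionally bounded in $Y$: this is the same key fact as \cite[Lemma 4.11]{KSZ} used in the proof of Theorem~\ref{thm:cpxysigma_jnp}, namely that every $f\in C(Y)$ is bounded on $\mathcal{S}$. Since each $X_\xi$ is clopen in $Y$, if $\mathcal{S}$ met infinitely many copies $X_{\xi_1}, X_{\xi_2}, \ldots$, then picking $p_i\in\mathcal{S}\cap X_{\xi_i}$ and defining $f\in C(Y)$ to equal $i$ on $X_{\xi_i}$ and $0$ on the remaining copies would yield a continuous function unbounded on $\mathcal{S}$, a contradiction. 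Hence $\mathcal{S}\subseteq X_{\xi_1}\cup\cdots\cup X_{\xi_m}$ for some finite set of indices.

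A pigeonhole argument then finishes: since $\|\varphi_n\|=1$, there exist $j\in\{1,\ldots,m\}$ and an infinite $I\subseteq\omega$ with $\|\varphi_n\restriction X_{\xi_j}\|\ge 1/m$ for all $n\in I$. Setting $\psi_n:=(\varphi_n\restriction X_{\xi_j})/\|\varphi_n\restriction X_{\xi_j}\|$ for $n\in I$, we have $\|\psi_n\|=1$; for any $g\in C(X_{\xi_j})$ its trivial extension $G\in C(Y)$ (equal to $g$ on $X_{\xi_j}$ and $0$ on the other copies) satisfies $\psi_n(g)=\varphi_n(G)/\|\varphi_n\restriction X_{\xi_j}\|$, so $|\psi_n(g)|\le m\,|\varphi_n(G)|\to 0$. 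Identifying $X_{\xi_j}$ with $X$, the sequence $(\psi_n)_{n\in I}$ is thus a JN-sequence on $X$, and Theorem~\ref{thm:bks} produces a complemented copy of $(c_0)_p$ in $C_p(X)$. The main obstacle is the functional-boundedness step that forces $\mathcal{S}$ into finitely many copies; once this is in hand, the restrict-and-renormalise step is essentially the argument already carried out in the first half of the proof of Theorem~\ref{thm:cpxysigma_jnp}.
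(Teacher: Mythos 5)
Your proof is correct, but it follows a genuinely different route from the paper. The paper identifies $C_p(X)^\kappa\approx C_p(X,\mathbb{R}^\kappa)\approx C_p(X,C_p(\kappa))$, where $\kappa$ carries the discrete topology, notes that $C_p(\kappa)$ has no JNP (so, by Theorem~\ref{thm:bks}, no complemented copy of $(c_0)_p$), and then quotes Theorem~\ref{thm:cpxcpy_cps_compl} --- so its converse direction is a two-line application of the $C_p(X,C_p(Y))$ machinery built on the Henriksen--Woods $\sigma$-topology. You instead write $C_p(X)^\kappa\approx C_p\bigl(\bigsqcup_{\xi<\kappa}X_\xi\bigr)$ and argue directly with a JN-sequence on the disjoint union: functional boundedness of $\mathcal{S}=\bigcup_n\supp(\varphi_n)$ (the same \cite[Lemma 4.11]{KSZ} the paper uses elsewhere) forces $\mathcal{S}$ into finitely many clopen copies, and then a pigeonhole plus restrict-and-renormalise step produces a JN-sequence on one copy of $X$; this is in effect the first half of the proof of Theorem~\ref{thm:cpxysigma_jnp} transplanted to the much simpler setting of a disjoint union, where it is even cleaner, since your extension $G$ vanishes identically off the chosen copy and so $\varphi_n(G)=(\varphi_n\restriction X_{\xi_j})(g)$ exactly, with no error term. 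What each approach buys: the paper's proof is shorter given its earlier results and illustrates the use of Theorem~\ref{thm:cpxcpy_cps_compl}; yours is self-contained modulo Theorem~\ref{thm:bks} and the cited lemma, bypassing the separately-continuous-functions apparatus altogether, at the cost of redoing the support-localisation argument by hand. Both easy directions (complementedness of $C_p(X)$ in the product) coincide.
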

\begin{proof}
  If $C_p(X)$ contains a complemented copy of the space $(c_0)_p$, then trivially $C_p(X)^\kappa$ does it, too, since $C_p(X)$ is complemented in $C_p(X)^\kappa$.

  Conversely, assume that $C_p(X)$ does not contain any complemented copies of $(c_0)_p$. Note that for any infinite discrete space $Y$ the space $C_p(Y)$ does not have the JNP, so if we equip the set $\kappa=\{\xi\colon\ \xi<\kappa\}$ with the discrete topology, then the space $C_p(\kappa)$ does not have the JNP. By Theorem \ref{thm:bks}, it follows that $C_p(\kappa)$ does not contain complemented copies of $(c_0)_p$.  Theorem \ref{thm:cpxcpy_cps_compl} yields that the space $C_p(X,C_p(\kappa))$ does not contain such a copy as well. Since $C_p(\kappa)\approx\mathbb{R}^\kappa$, we get the following equalities:
  \[C_p(X)^\kappa=C_p(X,\mathbb{R})^\kappa\approx C_p(X,\mathbb{R}^\kappa)\approx C_p(X,C_p(\kappa)),\]
  which implies that $C_p(X)^\kappa$ does not contain any complemented copies of the space $(c_0)_p$.
\end{proof}

\section{Complemented copies of $(c_{0})_{p}$ in $C_p(X,E)$\label{sec:c0p_cpxe}}
We start with the following simple observation.

\begin{proposition}\label{copy}
  Let $E$ be a locally convex space and $X$ a Tychonoff space containing an infinite  compact subset. Then $C_{p}(X,E)$ contains a closed copy of the space $(c_0)_p$.
\end{proposition}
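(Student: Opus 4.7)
The plan is first to reduce to the scalar case $E=\mathbb{R}$ and then to construct, using the infinite compact subset of $X$, an explicit closed embedding of $(c_0)_p$ into $C_p(X)$ via disjointly supported bump functions around a convergent sequence.

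For the reduction, pick any $e\in E\setminus\{0\}$ and a continuous linear functional $\varphi\in E'$ with $\varphi(e)=1$ (available by Hahn--Banach). The map $T\colon C_p(X)\to C_p(X,E)$, $T(f)(x)=f(x)e$, is a continuous linear injection, and $S\colon C_p(X,E)\to C_p(X)$, $S(g)(x)=\varphi(g(x))$, is a continuous linear left inverse. Hence $T(C_p(X))$ is a complemented, and in particular closed, copy of $C_p(X)$ inside $C_p(X,E)$, and it suffices to produce a closed copy of $(c_0)_p$ in $C_p(X)$.

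For the main construction let $K\subseteq X$ be infinite and compact. I will first fix in $K$ a non-trivial convergent sequence $x_n\to x_\infty$ of pairwise distinct points with $x_\infty\ne x_n$ for all $n$, and then, using the $T_{3\frac12}$-property of $X$ applied inductively (together with the fact that in a $T_3$ space disjoint compact sets have open neighbourhoods with disjoint closures), produce open sets $U_n\subseteq X$ with $x_n\in U_n$, with the closures $\overline{U_n}$ pairwise disjoint and avoiding $x_\infty$, and continuous bumps $f_n\colon X\to[0,1]$ with $f_n(x_n)=1$ and $f_n\equiv 0$ off $U_n$. For $a=(a_n)\in(c_0)_p$ set
\[
  \Phi(a)=\sum_n a_n f_n,
\]
which by disjointness of supports is a well-defined function on $X$, equal to $a_n f_n(x)$ on $U_n$ and to $0$ off $\bigcup_n U_n$. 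Continuity of $\Phi(a)$ is checked at three types of points: on $U_n$, and at boundary points $y\in\overline{U_n}\setminus U_n$ (where $f_n(y)=0$ and no other $U_m$ approaches $y$ by pairwise disjointness of the closures) it follows from continuity of $f_n$; at $y\notin\bigcup_n\overline{U_n}$ it follows from the $c_0$-condition, since for $\varepsilon>0$ and $N$ with $|a_n|<\varepsilon$ for $n\ge N$, the open set $X\setminus\bigcup_{n<N}\overline{U_n}$ is a neighbourhood of $y$ on which $|\Phi(a)|\le\varepsilon$. The map $\Phi\colon(c_0)_p\to C_p(X)$ is linear and continuous because each evaluation $\Phi(a)(y)$ depends on at most one coordinate of $a$, and $\Phi(a)(x_n)=a_n$ makes it a topological embedding.

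Finally I must show $\Phi((c_0)_p)$ is closed in $C_p(X)$. If $\Phi(a^\alpha)\to g$ in $C_p(X)$, setting $b_n=g(x_n)=\lim_\alpha a^\alpha_n$, the continuity of $g$ together with $x_n\to x_\infty$ and $g(x_\infty)=\lim_\alpha\Phi(a^\alpha)(x_\infty)=0$ forces $b_n\to 0$, so $b\in(c_0)_p$, and a pointwise check on each $U_n$ and off $\bigcup_n U_n$ gives $g=\Phi(b)$. The main obstacle I anticipate is the very first step, the existence of a non-trivial convergent sequence inside $K$; this is automatic for first countable or scattered $K$ but can fail in general (e.g.\ $K=\beta\omega$), in which case the bump-function argument must be replaced by one exploiting the structure of $X$ directly---for example, for $X=\beta\omega$ the subspace of $C_p(X)$ consisting of functions vanishing on $\omega^*$ supplies the desired closed copy via the characteristic functions of the isolated points.
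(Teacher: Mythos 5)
Your overall architecture is the same as the paper's: reduce to the scalar case by exhibiting $C_p(X)$ as a complemented (hence closed) subspace of $C_p(X,E)$ (the paper quotes \cite[Theorem I.4.4]{Schmets}; your $T,S$ argument is a correct direct proof of this), and then produce a closed copy of $(c_0)_p$ inside $C_p(X)$. The difference is that the paper simply cites the known scalar fact (\cite[Theorem 5]{kakol-sliwa}), whereas you attempt to prove it, and this is where the proposal is incomplete. Your construction genuinely needs a non-trivial convergent sequence $x_n\to x_\infty$ in the compact set $K$: both the choice of the bumps and, crucially, the closedness argument (where you use $g(x_\infty)=0$ and continuity along the sequence to get $g(x_n)\to0$) depend on it. As you yourself note, an infinite compact space need not contain such a sequence, and your fallback handles only the single space $X=\beta\omega$; it does not cover a general Tychonoff $X$ containing a copy of $\beta\omega$, nor compacta like $\omega^*$, which has no isolated points at all, so the ``characteristic functions of isolated points'' device is vacuous there. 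Since the proposition is asserted for every $X$ with an infinite compact subset, this is a real gap, not a technicality, and the sentence ``the bump-function argument must be replaced by one exploiting the structure of $X$ directly'' is an acknowledgement of it rather than a proof. (A minor additional point: ``no other $U_m$ approaches $y$'' does not follow from pairwise disjointness of the closures, since $y$ can lie in the closure of the infinite union; the clean way to get continuity of $\Phi(a)$ is to observe it is the uniform limit of the finite partial sums $\sum_{n<N}a_nf_n$.)

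The gap can in fact be closed without a convergent sequence, and without changing your construction much: choose distinct points $x_n\in K$ together with \emph{pairwise disjoint} open sets $U_n\subseteq X$ with $x_n\in U_n$ (this choice itself needs a short argument) and bumps $f_n$ as before. For closedness, suppose a continuous $g$ is a pointwise limit of elements $\Phi(a^\alpha)$; then $g$ vanishes off $\bigcup_nU_n$ and equals $g(x_n)f_n$ on $U_n$. If $g(x_n)\not\to0$, the infinite set $\{x_n\colon |g(x_n)|\ge\varepsilon\}$ has an $\omega$-accumulation point $y^*\in K$ by compactness; since each $U_m$ contains only the single point $x_m$ of the sequence, $y^*\notin\bigcup_mU_m$, so $g(y^*)=0$, while continuity of $g$ forces $|g(y^*)|\ge\varepsilon$ --- a contradiction. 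This replaces your $x_\infty$ by a cluster point supplied by compactness and yields the general case; alternatively, one can do as the paper does and quote the K\k{a}kol--\'Sliwa result. As it stands, however, your proposal proves the proposition only under the additional hypothesis that $K$ contains a non-trivial convergent sequence.
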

\begin{proof}
It is known that for such a space $X$, the space $C_p(X)$ contains a closed copy of the space $(c_0)_p$ (see the proof of \cite[Theorem 5]{kakol-sliwa} for $\mathbb{K}=\mathbb{R}$). The thesis follows now from the fact that $C_p(X)$ is complemented in $C_p(X,E)$ (see~\cite[Theorem I.4.4]{Schmets}).
\end{proof}

Let us now prove the main result of this section. The proof is inspired by the arguments presented in the proof of Theorem 1 in Doma\'nski and Drewnowski \cite{DD}.

\begin{theorem}\label{the1}
  Let $E$ be a barrelled locally convex space with the Josefson--Nissenzweig property and let $X$ be a Tychonoff  space containing an infinite compact set. Then $C_{p}(X, E)$ contains a complemented copy of $(c_0)_{p}$.
\end{theorem}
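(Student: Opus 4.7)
I would follow the Doma\'nski--Drewnowski strategy, adapted from the Fr\'echet setting to the barrelled one, with the infinite compact subset $K \subset X$ playing the role of the ``control set'' on which equicontinuous weak$^*$ null sequences converge uniformly.

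First, I would unpack the JNP of $E$ to obtain working data. The JNP gives a weak$^*$ null sequence $(\varphi_n)_{n\io}$ in $E'$ and a barrel-bounded set $B\sub E$ on which $(\varphi_n)$ does not tend uniformly to $0$; passing to a subsequence yields $\eps>0$ and $e_n\in B$ with $|\varphi_n(e_n)|\ge\eps$. Since $E$ is barrelled, every barrel-bounded set is bounded, so $\{e_n:n\io\}$ is bounded in $E$; moreover the Banach--Steinhaus theorem, applied to the weak$^*$ bounded sequence $(\varphi_n)$, implies $(\varphi_n)$ is \emph{equicontinuous}. After rescaling $\varphi_n$ by $1/\varphi_n(e_n)$ (which preserves weak$^*$ nullness and equicontinuity, as the denominators are bounded below by $\eps$), I may assume $\varphi_n(e_n)=1$ for all $n$. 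A standard consequence of equicontinuity plus pointwise convergence to $0$ is that $\varphi_n\to 0$ uniformly on compact subsets of $E$; this is the key quantitative tool.

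Next I would harvest the needed structure from $X$. Since $X$ contains an infinite compact subset $K$, the argument used in the proof of Proposition~\ref{copy} (and in \cite{kakol-sliwa}) supplies a sequence of continuous functions $f_n\colon X\to[0,1]$ with pairwise disjoint supports and distinguished points $x_n\in K$ satisfying $f_n(x_n)=1$, such that the map $(c_0)_p\ni a\mapsto\sum_n a_n f_n$ is a closed linear embedding of $(c_0)_p$ into $C_p(X)$. I would then define
\[
  T\colon (c_0)_p\lra C_p(X,E),\qquad T(a)(x)=\sum_{n\io}a_n f_n(x)\,e_n.
\]
Because the supports are disjoint, at every $x\in X$ at most one summand is nonzero, so $T(a)$ is well-defined pointwise. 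To verify $T(a)\in C(X,E)$ at an accumulation point $x_0$ of $\bigcup_n\supp(f_n)$, I would use that $\{e_n\}$ is bounded: for a balanced convex $0$-neighborhood $W\sub E$ pick $r>0$ with $\{e_n\}\sub rW$; then the tail $n>N$ with $|a_n|<1/(2r)$ contributes to $W/2$ automatically, while the finitely many $n\le N$ can be handled by shrinking a neighborhood of $x_0$ using continuity of each $f_n$ (and $f_n(x_0)=0$). Continuity of $T$ at $0$ and injectivity with continuous inverse follow from $f_n(x_n)=1$ together with Hahn--Banach to recover each coordinate $a_n$.

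Finally I would construct a continuous linear projection onto $T[(c_0)_p]$. Define
\[
  P\colon C_p(X,E)\lra C_p(X,E),\qquad P(g)(x)=\sum_{n\io}\varphi_n(g(x_n))\,f_n(x)\,e_n.
\]
Because $x_n\in K$ and $g$ is continuous, $\{g(x_n):n\io\}$ sits inside the compact set $g[K]\sub E$; by the uniform convergence of the equicontinuous weak$^*$ null sequence $(\varphi_n)$ on compacta, $\varphi_n(g(x_n))\to 0$, so $(\varphi_n(g(x_n)))_{n\io}\in c_0$ and $P(g)=T\big((\varphi_n(g(x_n)))_n\big)\in T[(c_0)_p]$. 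Continuity of $P$ follows directly from the definition of the pointwise topology: a basic $0$-neighborhood in $C_p(X,E)$ determined by finitely many $z_j$ and $W_j$ is controlled by a basic $0$-neighborhood determined by the finitely many $x_{n(j)}$ (the indices for which $z_j\in\supp f_{n(j)}$) together with continuous preimages of $W_j$ under the maps $e\mapsto\varphi_{n(j)}(e)f_{n(j)}(z_j)e_{n(j)}$. The identity $\varphi_n(T(a)(x_n))=\varphi_n(a_n e_n)=a_n$ shows $P\circ T=\Id$, so $P$ is indeed a projection onto a copy of $(c_0)_p$.

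The main obstacle, and the place where barrelledness is essential, is precisely the convergence $\varphi_n(g(x_n))\to 0$: without a convergent subsequence inside $K$ (which can fail, e.g.\ for $X=\beta\omega$) one cannot argue pointwise. Replacing pointwise with uniform convergence on $g[K]$ is what makes the argument go through, and this uniform convergence in turn rests on equicontinuity of $(\varphi_n)$, guaranteed by the barrelledness of $E$.
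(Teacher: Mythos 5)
Your proposal is correct and follows essentially the same route as the paper's proof, which likewise adapts Doma\'nski--Drewnowski: the same disjointly supported bump functions at points $t_n$ of the infinite compact set $K$, the same embedding $a\mapsto\sum_n a_nf_n(\cdot)e_n$ and coordinate functionals $g\mapsto(\langle\varphi_n,g(t_n)\rangle)_n$, and the same key use of barrelledness to get equicontinuity of the weak$^*$ null sequence and hence uniform convergence on the compact set $g[K]$. The only differences are cosmetic (you rescale the functionals rather than the vectors, and write the projection as $T\circ Q$ instead of $JP$); just note that continuity of $T$ comes from the boundedness of $(e_n)$ and the disjoint supports rather than from $f_n(x_n)=1$ and Hahn--Banach.
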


\begin{proof}
  Since $E$ has the JNP, there is a weak$^*$ null sequence~$(x_n^*)_{n\in\omega}$ in $E'$ which is not convergent in the $\beta^*(E',E)$-topology. Taking into account that $E$ is barrelled this means that the sequence is not strongly convergent and hence we may choose a bounded sequence $(x_n)_{n\in\omega}$ in $E$ with $\langle x_n^*, x_n\rangle = 1$ for every $n\in\omega$. 
  Let $K\subset X$ be an infinite compact set. Using the Tychonoff property, we may choose a sequence of continuous functions $\varphi_n\colon X \to [0,1]$ with disjoint supports and $\varphi_n(t_n)=1$ for some $t_n\in K$.

  We set
  \[
    J\colon (c_0)_{p} \to C_p(X,E), \qquad (a_n)_{n\in\omega}\ \substack{J\\\longmapsto\\~} \sum_{n\in\omega} a_n\varphi_n(t) x_n
  \]
  and
  \[
    P\colon C_p(X,E) \to (c_0)_{p}, \qquad f\substack{P\\\longmapsto\\~} \big(\langle x_n^*, f(t_n)\rangle\big)_{n\in\omega}.
  \]
  The mapping $J$ is well-defined, since the sequence $(a_n x_n)_{n\in\omega}$ is bounded and the functions $\varphi_n$ have disjoint supports---this implies that the series converges uniformly on $X$, and hence it defines a continuous function from $X$ into $E$. Note that the property of the supports of $\varphi_n$'s implies that this function only takes values of the form $a_n\varphi_n(t)x_n$. Let us see why $P$ is well-defined, too. Since $f$ is continuous the set $f(K)\subset E$ is compact. Since on equicontinuous subsets of $E'$ the weak$^*$ topology coincides with the topology of uniform convergence on precompact subsets of $E$ (see e.g.~\cite[Proposition~3.9.8]{Horvath}) and the sequence $(x_n^*)_{n\in\omega}$, being a weak$^*$ null sequence, is equicontinuous, we may conclude that $\langle x_n^*, e\rangle \to 0$ uniformly for $e\in f(K)$.

  We now check that these mappings are continuous.  Let $p$ be a continuous semi-norm on $E$ and $C>0$ such that $p(x_n)\leq C$ for all $n$ (such $C$ exists, since $(x_n)_{n\in\omega}$ is a bounded sequence).  For each $t\in X$,  since the functions $\varphi_n$ have disjoint supports, there is at most one $m_t$ with $\varphi_{m_t}(t)\neq 0$. We pick this $m_t$ if it exists and set $m_t=1$ otherwise (of course, any other $m_t$ works as well in this case), and obtain that for every $(a_n)_{n\in\omega}\in(c_0)_p$ and $t\in X$ we have:
  \[
    p\Big(J((a_n)_{n\in\omega})(t)\Big) = p\Big(\sum_{n\in\omega}a_n\varphi_n(t)x_n\Big) \leq C |a_{m_t}| \leq C \max_{n\leq N} |a_n|
  \]
  for all $N\geq m_t$. It follows that $J$ is continuous. Now, let $f\in C_p(X,E)$ be given and $N\in\omega$ be arbitrary. Observe that since the functionals $x_n^*$ are continuous there is a continuous semi-norm $p$ on $E$ and $\tilde{C}>0$ with 
  \[
    \max_{n\leq N} |(P(f))_n| = \max_{n\leq N} |\langle x_n^*, f(t_n)\rangle| \leq \tilde{C} \max_{n\leq N} p(f(t_n)),
  \]
  which implies the continuity of $P$.

  Since
  \[
    P(J((a_n)_{n\in\omega})) = \Big(\Big\langle x_n^*, \sum_{m=1}^{\infty} a_m\varphi_m(t_n)x_m\Big\rangle \Big)_{n\in\omega} = (a_n\langle x_n^*,x_n\rangle)_{n\in\omega} = (a_n)_{n\in\omega},
  \]
  we have that $J$ is an isomorphic embedding and $P$ is a surjection. Since $JPJP=JP$, the mapping $JP$ is a projection onto the image of $J$.
\end{proof}

\begin{remark}
Note the following comments concerning Theorem~\ref{the1}.

(1) If $X$ is infinite discrete and $E$ is a Banach space, then $C_p(X,E)\approx E^X$, which does not contain any closed copies of $(c_0)_p$, as the latter is not complete.

(2) The existence of an infinite compact subspace in $X$ is not necessary for Theorem~\ref{the1}. Indeed,  if $X$ is a pseudocompact space with all compact subsets being finite and $K$ is an infinite compact space, then  $C_p(X,C(K))\approx C_k(X,C_k(K))\approx C_k(X\times K)$, 
which, by~\cite[Proposition~2.10]{KMSZ}, contains a complemented copy of $(c_0)_p$.

(3) If $C_p(Y)$ has the JNP for an infinite Tychonoff space $Y$, then $C_p(Y)$  is not barrelled (by the Buchwalter--Schmets theorem \cite{BS} and \cite[Proposition 4.1]{KMSZ}), so it does not satisfy the assumptions of Theorem \ref{the1}, even though the space $C_p(X,C_p(Y))$ still has the JNP for every space $X$ (by Theorem \ref{thm:cpxcpy_cps_compl}).
\end{remark}

As a corollary we immediately obtain the following $C_p$-variant of  Doma\'nski--Drewnowski's Theorem \ref{dom-drew}. 

\begin{corollary}\label{variant_dd}
  Let $X$ be a Tychonoff  space containing an infinite compact set and $E$ a Fr\'{e}chet space which is not a Montel space. Then $C_{p}(X, E)$ contains a complemented copy of $(c_0)_p$.
\end{corollary}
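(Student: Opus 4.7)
The plan is to simply verify that a Fr\'echet space which is not Montel satisfies the two hypotheses on $E$ in Theorem \ref{the1}, namely that $E$ is barrelled and that $E$ has the Josefson--Nissenzweig property. Once these two ingredients are in place, Theorem \ref{the1} applies directly to the given space $X$ (which already satisfies the assumption of containing an infinite compact subset) and yields the desired complemented copy of $(c_0)_p$ in $C_p(X,E)$.

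For barrelledness, I would just cite the standard fact that every Fr\'echet space is barrelled (as a complete metrizable locally convex space, it is a Baire space, and every Baire locally convex space is barrelled). For the Josefson--Nissenzweig property, I would invoke the characterisation of Bonet, Lindstr\"om and Valdivia \cite{BLV} that was already recalled explicitly in Section \ref{sec:jnp}: a Fr\'echet space has the JNP if and only if it is not a Montel space. Since $E$ is by assumption a non-Montel Fr\'echet space, this gives the JNP for $E$.

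With both hypotheses verified, an application of Theorem \ref{the1} concludes the proof. There is no genuine obstacle here; the content of the corollary lies entirely in Theorem \ref{the1} together with the Bonet--Lindstr\"om--Valdivia characterisation, and the role of this corollary is to make explicit the promised $C_p$-analogue of the Doma\'nski--Drewnowski Theorem \ref{dom-drew}.
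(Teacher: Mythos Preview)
Your proposal is correct and follows exactly the paper's own argument: the proof in the paper simply notes that a Fr\'echet space has the JNP if and only if it is not Montel (citing \cite[Theorem~2.3]{BG}, which is the same characterisation you attribute to \cite{BLV} as recalled in Section~\ref{sec:jnp}) and then invokes Theorem~\ref{the1}. Your additional explicit remark that Fr\'echet spaces are barrelled is fine and, if anything, makes the application of Theorem~\ref{the1} more transparent.
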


\begin{proof}
  Since by~\cite[Theorem~2.3]{BG} and \cite{BLV} a Fr\'{e}chet space has the JNP if and only if it is not a Montel space, the claim is a special case of Theorem~\ref{the1}.
\end{proof}

Using the Closed Graph Theorem one can easily derive a special case of Theorem \ref{dom-drew} for Fr\'echet spaces $C_k(X,E)$. Thus, if $X$ is compact and $E$ infinite-dimensional Banach, we obtain Cembranos' and Freniche's Theorem \ref{Cembranos}.

\begin{corollary}\label{first}
Let $X$ be a Tychonoff space containing an infinite compact subspace and let $E$ be a Fr\'echet space which is not Montel. If  $C_{k}(X,E)$  is  an infinite-dimensional  Fr\'echet space, then $C_k(X,E)$ contains a complemented copy of the Banach space $c_0$.
\end{corollary}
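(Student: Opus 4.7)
The plan is to lift the complemented $(c_0)_p$-copy provided by Corollary \ref{variant_dd} to a complemented copy of the Banach space $c_0$ inside the finer Fr\'echet topology of $C_k(X,E)$ by means of the Closed Graph Theorem. I would start by taking the embedding $J\colon (c_0)_p \to C_p(X,E)$ and the continuous surjection $P\colon C_p(X,E) \to (c_0)_p$ explicitly constructed in the proof of Theorem \ref{the1}, satisfying $PJ = \mathrm{id}$ so that $JP$ is a continuous projection onto $J[(c_0)_p]$. By construction the image of $J$ lies in $C(X,E)$, so $J$ makes sense as a set-theoretic map $c_0 \to C_k(X,E)$; and $P$ is defined on the whole underlying vector space $C(X,E)$ with values actually in $c_0$ (not merely in $(c_0)_p$), as already verified in the proof of Theorem \ref{the1} via an equicontinuity argument on compact subsets.

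The main step is to upgrade $J$ and $P$ to continuous maps between the Fr\'echet topologies. Both source and target are Fr\'echet (here the hypothesis that $C_k(X,E)$ is Fr\'echet is essential), so the Closed Graph Theorem applies to linear maps in either direction. For $J\colon c_0 \to C_k(X,E)$, suppose $a^{(k)}\to a$ in $c_0$ and $J(a^{(k)})\to f$ in $C_k(X,E)$; since the inclusions $c_0 \hookrightarrow (c_0)_p$ and $C_k(X,E)\hookrightarrow C_p(X,E)$ are continuous and $J\colon (c_0)_p \to C_p(X,E)$ is continuous by Theorem \ref{the1}, passing to the weaker pointwise topologies forces $f=J(a)$, so the graph is closed. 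A symmetric chain of continuous maps shows that the graph of $P\colon C_k(X,E)\to c_0$ is closed. Hence both $J$ and $P$ are continuous in the Fr\'echet topologies.

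Since $PJ=\mathrm{id}_{c_0}$, the composition $JP$ is a continuous linear projection on $C_k(X,E)$ with closed range $J[c_0]$; the Open Mapping Theorem applied to the continuous linear bijection $J\colon c_0 \to J[c_0]$ between Fr\'echet spaces then identifies $J[c_0]$ topologically with the Banach space $c_0$, producing the desired complemented copy. The one subtlety to verify carefully is that $P(f)$ actually lies in $c_0$, and not only in $(c_0)_p$, for every $f\in C_k(X,E)$; but this is exactly the equicontinuity/uniform-convergence-on-compacta observation already contained in the proof of Theorem \ref{the1}, which applies because every $f\in C_k(X,E)$ is in particular a continuous $E$-valued function whose restriction to the fixed compact set $K\subseteq X$ has compact image in $E$.
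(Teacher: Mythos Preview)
Your proof is correct and follows essentially the same route as the paper: both arguments lift the complemented $(c_0)_p$-copy from Theorem~\ref{the1} to the Fr\'echet topology via the Closed Graph Theorem, exploiting that the identity maps $c_0\to(c_0)_p$ and $C_k(X,E)\to C_p(X,E)$ are continuous. The paper applies the Closed Graph Theorem to the identity map on the subspace $G=J[(c_0)_p]$ between the topologies $\xi$ (the transported $c_0$-norm) and $\tau_k\restriction G$, whereas you apply it directly to $J$ and to $P$ separately; your version is in fact slightly more explicit about the $\tau_k$-continuity of the projection, which the paper leaves to the reader. One small remark: since $c_0$ and $(c_0)_p$ have identical underlying sets, the ``subtlety'' you flag about $P(f)$ lying in $c_0$ rather than in $(c_0)_p$ is vacuous and needs no verification.
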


\begin{proof}
Let $\tau_p$ and $\tau_{k}$ denote the pointwise topology of the space $C_p(X,E)$ and the compact-open  topology of $C_k(X,E)$, respectively. By Theorem \ref{the1}, $C_{p}(X,E)$ contains a (closed) complemented subspace $G$ which is isomorphic to $(c_{0})_{p}$, i.e. $(G,\tau_{p}\restriction G)\approx (c_{0})_{p}$. 
Let $\xi$ be the Banach space topology on $G$ such that $\tau_{p}\restriction G\subseteq \xi$ and $(G,\xi)\approx c_{0}$. The identity map
  $I\colon(G,\tau_{p}\restriction G)\to (G,\tau_{k}\restriction G)$ has closed graph, so also $I\colon(G,\xi)\to(G,\tau_{k}\restriction G)$ has closed graph. The  Closed Graph Theorem (\cite[Theorem 4.1.10]{bonet})
  assures that both $I$ and $I^{-1}$ are continuous, so $(G,\xi)=(G,\tau_{k}\restriction G)$, which completes the proof.
\end{proof}

Theorem \ref{the1} yields also the following dichotomy.

\begin{corollary} \label{mon}
  Let $X$ and $Y$ be  infinite Tychonoff spaces. Assume that $Y$ is compact. Then $C_{p}(X,C(Y))$ contains a complemented copy of $\mathbb{R}^{\omega}$ or a complemented copy of $(c_{0})_{p}$.
\end{corollary}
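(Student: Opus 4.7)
The plan is to split the proof into two cases according to whether $X$ is pseudocompact. Before starting, observe that since $Y$ is infinite compact, $C(Y)$ is an infinite-dimensional Banach space, hence barrelled and, by the classical Josefson--Nissenzweig theorem, possessing the JNP. Also, by \cite[Theorem~I.4.4]{Schmets}, $C_p(X)$ is a complemented subspace of $C_p(X,C(Y))$, so any complemented copy of $\mathbb{R}^\omega$ inside $C_p(X)$ will lift to a complemented copy inside $C_p(X,C(Y))$.

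Assume first that $X$ is pseudocompact. If $X$ contains an infinite compact subset, then Theorem~\ref{the1} applied with $E=C(Y)$ directly yields a complemented copy of $(c_0)_p$ in $C_p(X,C(Y))$. If instead every compact subset of $X$ is finite, the compact-open topology and the pointwise topology on $C(X,C(Y))$ coincide, and since $Y$ is locally compact one obtains the isomorphism $C_p(X,C(Y))\approx C_k(X\times Y)$, exactly as in Remark~(2) above; the cited \cite[Proposition~2.10]{KMSZ} then supplies a complemented copy of $(c_0)_p$ in $C_k(X\times Y)$, which gives the desired copy.

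Assume now that $X$ is not pseudocompact, and pick a continuous unbounded function $g\colon X\to\mathbb{R}$. Inductively choose points $x_n\in X$ with $g(x_{n+1})>g(x_n)+1$ and set $U_n = g^{-1}\bigl((g(x_n)-\tfrac14,\,g(x_n)+\tfrac14)\bigr)$. Then $\{U_n\}_{n\in\omega}$ is a family of pairwise disjoint open neighborhoods of the $x_n$ (since $|g(x_n)-g(x_m)|\ge 1$ for $n\ne m$) which is locally finite in $X$: by continuity of $g$, every $x\in X$ has a preimage-neighborhood mapping into a bounded interval that meets only finitely many of the defining intervals. By the Tychonoff property, pick $\varphi_n\colon X\to[0,1]$ with $\varphi_n(x_n)=1$ and $\varphi_n\equiv 0$ on $X\setminus U_n$. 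Then the map
\[
J\colon\mathbb{R}^\omega\to C_p(X),\qquad J\bigl((a_n)_{n\in\omega}\bigr)=\sum_{n\in\omega}a_n\varphi_n,
\]
is well-defined (the sum is locally finite) and continuous (each evaluation depends on only finitely many coordinates of $(a_n)$), the restriction $P\colon f\mapsto(f(x_n))_{n\in\omega}$ is continuous into $\mathbb{R}^\omega$, and pairwise disjointness of the $U_n$ gives $\varphi_n(x_m)=\delta_{n,m}$, whence $P\circ J=\mathrm{id}_{\mathbb{R}^\omega}$. Thus $JP$ is a continuous linear projection of $C_p(X)$ onto a copy of $\mathbb{R}^\omega$.

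I expect the main obstacle to be the pseudocompact sub-case where every compact subset of $X$ is finite: this case falls outside the scope of Theorem~\ref{the1} and relies on the nontrivial external input \cite[Proposition~2.10]{KMSZ} to obtain the complemented $(c_0)_p$ inside the generally non-Fr\'echet space $C_k(X\times Y)$. The remaining verifications (the Banach-space facts about $C(Y)$, the isomorphism in Remark~(2), and the checks for $J$ and $P$ in the non-pseudocompact case) are routine.
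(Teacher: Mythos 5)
Your proof is correct and follows essentially the same three-case decomposition as the paper: $X$ not pseudocompact (complemented $\mathbb{R}^\omega$ via $C_p(X)$ being complemented in $C_p(X,C(Y))$), $X$ pseudocompact with an infinite compact subset (Theorem \ref{the1} applied to the barrelled JNP space $C(Y)$), and $X$ pseudocompact with all compact subsets finite (the identification $C_p(X,C(Y))\approx C_k(X\times Y)$ and \cite[Proposition 2.10]{KMSZ}). The only difference is that you spell out the standard explicit construction of a complemented copy of $\mathbb{R}^\omega$ in $C_p(X)$ for non-pseudocompact $X$ (modulo replacing $g$ by $|g|$ to ensure unboundedness from above), a classical fact the paper simply asserts.
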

\begin{proof}
We need to consider the following three cases:

  Case 1. \emph{$X$ is not pseudocompact.}  Then $C_p(X, C(Y))$ contains a complemented copy of $\mathbb{R}^{\omega}$ (since  $C_{p}(X)$ contains such a copy and is complemented in $C_{p}(X,C(Y))$).

  Case 2. \emph{$X$ is pseudocompact and $X$ contains an infinite compact subset}. We apply Theorem \ref{the1} to get a complemented copy of $(c_0)_p$.

  Case 3. \emph{$X$ is pseudocompact and every compact set in $X$ is finite}. Since $X\times Y$ is pseudocompact, we apply \cite[Proposition 2.10]{KMSZ} to deduce that $C_p(X,C(Y))\approx C_{k}(X\times Y)$ contains a complemented copy of $(c_{0})_p$.  
\end{proof}

Recall that a Tychonoff space $X$ is \textit{$\sigma$-compact} if $X$ is covered by an increasing sequence of compact subsets.
We provide the following extension of \cite[Proposition 2.10]{KMSZ}.

\begin{corollary}\label{second_cor}
  Let $X$ and $Y$ be infinite locally compact $\sigma$-compact spaces. Then $C_{k}(X\times Y)$ contains a complemented copy of $\mathbb{R}^{\omega}$ or a complemented copy of the  Banach space $c_{0}$.
\end{corollary}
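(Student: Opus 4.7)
The plan rests on the classical isomorphism $C_k(X \times Y) \approx C_k(X, C_k(Y))$, which holds since $Y$ is locally compact (\cite[Corollary 2.5.7]{McCoy}), together with a case analysis according to whether both factors contain infinite compact subsets. Recall that, for an infinite locally compact space, having an infinite compact subset is equivalent to being non-discrete (a non-isolated point in such a space has an infinite compact neighbourhood by local compactness plus Hausdorffness).

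\textbf{Case A: both $X$ and $Y$ contain infinite compact subsets.} Since $Y$ is locally compact and $\sigma$-compact, this hypothesis forces $C_k(Y)$ to be an infinite-dimensional Fr\'echet space which is not Montel (\cite[Theorem 11.8.7]{Jarchow}; alternatively, restriction to any infinite compact $K \subseteq Y$ is, by Tietze extension and the open mapping theorem for Fr\'echet spaces, a quotient map onto the infinite-dimensional Banach space $C(K)$, and Montelness would pass to this Fr\'echet quotient). Applying Doma\'nski--Drewnowski's Theorem \ref{dom-drew} with the space $X$ and the Fr\'echet space $E = C_k(Y)$ then produces a complemented copy of the Banach space $c_0$ in $C_k(X, C_k(Y)) \approx C_k(X \times Y)$.

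\textbf{Case B: at least one of $X$, $Y$ is discrete.} By the symmetry $C_k(X \times Y) \approx C_k(Y \times X)$, we may assume $X$ is discrete, and then $\sigma$-compactness forces $X$ to be countable. All compact subsets of such $X$ are finite, so the compact-open topology on $C_k(X, C_k(Y))$ reduces to the topology of pointwise convergence, yielding $C_k(X, C_k(Y)) \approx C_k(Y)^\omega$. Fix $y_0 \in Y$; the evaluation $f \mapsto f(y_0)$ is a continuous linear surjection $C_k(Y) \to \mathbb{R}$ having the constant-function embedding $\alpha \mapsto \alpha \cdot \mathbf{1}_Y$ as a right inverse, so $\mathbb{R}$ is complemented in $C_k(Y)$. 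Applying this coordinate-wise yields a continuous linear projection of $C_k(Y)^\omega$ onto a complemented copy of $\mathbb{R}^\omega$ inside $C_k(X \times Y)$.

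The only genuinely non-routine input is the non-Montel character of $C_k(Y)$ in Case A, which feeds the hypothesis of Theorem \ref{dom-drew}; the remainder consists of unwinding the compact-open topology on countable discrete spaces and on products, and observing the evaluation-versus-constants splitting of $C_k(Y)$.
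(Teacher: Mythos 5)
Your proof is correct and follows essentially the paper's route: the McCoy isomorphism $C_k(X\times Y)\approx C_k(X,C_k(Y))$ (\cite[Corollary 2.5.7]{McCoy}), a dichotomy according to discreteness of the factors, the non-Montel Fr\'echet space $C_k(Y)$ feeding a Cembranos-type result (you invoke Theorem \ref{dom-drew} directly, where the paper uses its Corollary \ref{first}, legitimate either way since $C_k(X,C_k(Y))$ is Fr\'echet here), and a complemented $\mathbb{R}^\omega$ coming from a countable discrete factor; your symmetric case split even treats the mixed case (discrete $X$, non-discrete $Y$) explicitly via the $\mathbb{R}^\omega$ alternative. One caveat: your parenthetical ``alternative'' justification that $C_k(Y)$ is not Montel fails, because Montelness does not pass to Fr\'echet quotients (by the classical K\"othe--Grothendieck example there is a Fr\'echet--Montel space with a quotient isomorphic to $\ell_1$), so you should keep the citation of \cite[Theorem 11.8.7]{Jarchow} or argue directly, e.g.\ with a bounded sequence of disjointly supported norm-one functions peaking on an infinite compact subset of $Y$, which has no subsequence converging uniformly on that compact set.
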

\begin{proof}
By \cite[Corollary 2.5.7]{McCoy}, the spaces $C_{k}(X\times Y)$ and $C_{k}(X,C_{k}(Y))$ are isomorphic.  Assume first that $Y$ is non-discrete. Since $C_{k}(Y)$ is not Montel (see \cite[Theorem 11.7.7]{Jarchow}) and it is  a Fr\'echet space \cite[10.1.25]{bonet}, we apply Corollary \ref{first} to obtain a complemented copy of the Banach space $c_0$ (note that the space $C_k(X,C_k(Y))$ is Fr\'echet by \cite[Theorem IV.1.3.(b)]{Schmets}). On the other hand, if $Y$ is discrete, then $Y$ is countable, and hence the space $\mathbb{R}^\omega=C_{k}(Y)$ is complemented in $C_{k}(X, C_{k}(Y))\approx C_{k}(X\times Y)$.
\end{proof}

\begin{remark}
If $X$ is a locally compact and $\sigma$-compact space without  infinite compact subsets, then $X$ must necessarily be discrete and countable, and hence homeomorphic to $\omega$. It follows that for such $X$ and any locally convex space $E$, the space $C_p(X,E)=C_k(X,E)$ contains a complemented copy of $\mathbb{R}^\omega$ (cf. Case 1 in the proof of Corollary \ref{mon}). 
\end{remark}

Since $(c_0)_p\approx C_p(S)$ is linearly homeomorphic to  $C_p(S)\times\mathbb{R}$, we obtain the following application of Corollary~\ref{first}.
\begin{corollary} \label{second}
  If $X$ and $Y$ are as in Corollary \ref{mon}, then the space  $C_{p}(X,C(Y))$ is isomorphic to the space $C_{p}(X,C(Y))\times\mathbb{R}$.
\end{corollary}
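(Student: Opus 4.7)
The plan is to combine Corollary~\ref{mon} with the observation, flagged immediately before the statement, that both of the candidate complemented subspaces produced there---namely $\mathbb{R}^\omega$ and $(c_0)_p$---absorb a one-dimensional summand.

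First I would invoke Corollary~\ref{mon} to write $C_p(X,C(Y))\approx F\oplus G$ for some closed linear subspace $G$, where $F$ is isomorphic either to $\mathbb{R}^\omega$ or to $(c_0)_p$. The one ingredient to check in both options is
\[
  F\approx F\times\mathbb{R}.
\]
For $F\approx\mathbb{R}^\omega$ this is immediate, because $\mathbb{R}^\omega\times\mathbb{R}=\mathbb{R}^{\omega+1}$, and any bijection between $\omega$ and $\omega+1$ induces an isomorphism of locally convex spaces $\mathbb{R}^{\omega+1}\approx\mathbb{R}^\omega$. For $F\approx(c_0)_p$ I would lean on the chain $(c_0)_p\approx C_p(S)\approx C_p(S)\times\mathbb{R}\approx(c_0)_p\times\mathbb{R}$; the middle isomorphism is exactly the one recorded in the sentence preceding the statement, and one concrete realisation sends $g\in C_p(S)$ to the pair $\bigl((g(1/n)-g(0))_{n\ge 1},\,g(0)\bigr)\in(c_0)_p\times\mathbb{R}$, with inverse $((a_n)_{n\ge 1},c)\mapsto g$, where $g(0)=c$ and $g(1/n)=a_n+c$.

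Once $F\approx F\times\mathbb{R}$ is available, a short rearrangement of topological direct sums closes the argument:
\[
  C_p(X,C(Y))\approx F\oplus G\approx(F\times\mathbb{R})\oplus G\approx(F\oplus G)\times\mathbb{R}\approx C_p(X,C(Y))\times\mathbb{R}.
\]
I do not anticipate any substantial obstacle: the real work---isolating a complemented copy of $\mathbb{R}^\omega$ or of $(c_0)_p$ inside $C_p(X,C(Y))$---has already been done in Corollary~\ref{mon}, so only the bookkeeping with commutativity and associativity of topological direct sums of locally convex spaces remains to be tracked.
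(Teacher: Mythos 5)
Your proposal is correct and follows essentially the same route as the paper: the paper derives Corollary \ref{second} directly from the dichotomy of Corollary \ref{mon} together with the observation (stated just before the corollary) that $(c_0)_p\approx C_p(S)\approx C_p(S)\times\mathbb{R}$, while $\mathbb{R}^\omega\times\mathbb{R}\approx\mathbb{R}^\omega$ is immediate, and then rearranges the complemented decomposition exactly as you do. Your explicit isomorphism $C_p(S)\approx(c_0)_p\times\mathbb{R}$ merely spells out what the paper leaves implicit.
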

The above result may be thought of as a (partial) positive answer to a problem of Arkhangel'ski for spaces of the form $C_p(X,E)$. Recall that Arkhangel'ski (see \cite{arkh4}) asked if it is true that for every infinite (compact) space $K$ the space  $C_{p}(K)$ is linearly homeomorphic to  $C_{p}(K)\times\mathbb{R}$. For a wide class of spaces the answer to this question is affirmative, e.g. if the space $X$ contains a non-trivial convergent sequence, or $X$ is not pseudocompact (see \cite[Section 4]{arkh4}), yet, in general, the answer is negative even for compact spaces $K$---namely, Marciszewski \cite{marci} showed that
there exists an infinite compact space $K$ such that $C_{p}(K)$ cannot be mapped onto  $C_{p}(K)\times\mathbb{R}$ by a continuous linear surjection.

We finish this section with the following natural  question motivated by  Theorems \ref{thm:bks} and \ref{thm:cpxcpy_cps_compl}.

\begin{problem}
Let $X$ be a Tychonoff space (containing an infinite compact subspace) and $E$ a locally convex space. Assume that $C_p(X,E)$ has the JNP. Does it follow that $C_p(X,E)$ contains a complemented copy of $(c_0)_p$?
\end{problem}

\section{Complemented copies of $(c_{0})_{p}$ in $C_p(X,E_{w})$\label{sec:c0p_cpxew}}

In order to provide a ``weak'' version of Theorem \ref{the1}, we need the following auxiliary result proved by Doma\'nski and Drewnowski (\cite[Lemma 1]{DD}). The lemma plays a similar role in proving Theorem \ref{the1-1} as \cite[Theorem 2]{BLV} does in the proof of Theorem \ref{the1}. As usual, by $\ell_1$ we mean the standard Banach space of all summable real-valued sequences.

\begin{lemma}[{\cite[Lemma 1]{DD}}]\label{lemma:dd1}
  Let $E$ be a locally convex space containing a copy of the Banach space $\ell_{1}$. Then there exists an equicontinuous sequence $(u_{n})_{n}$ in $E'$ which converges uniformly on each weakly compact set in $E_{w}$  but on some bounded set $B$ in $E$ one has
  $\sup_{x\in B}|u_{n}(x)|=1$ for each $n\in\omega$.
\end{lemma}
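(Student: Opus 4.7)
The plan is to build $(u_n)$ from the coordinate biorthogonal system of the copy of $\ell_1$ sitting in $E$, Hahn--Banach extend it to all of $E'$, and then exploit the Schur property of $\ell_1$ (via Rosenthal's theorem in $E$) to secure uniform convergence on weakly compact sets.

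Concretely, first fix an isomorphic embedding $T\colon\ell_1\to F\sub E$ onto a closed subspace $F$, and set $f_n=T(e_n)$ for $(e_n)_{n\io}$ the canonical basis of $\ell_1$. Then $B:=\{f_n\colon n\io\}$ is bounded in $E$. Define $\phi_n\in F'$ by $\phi_n(f)=(T^{-1}f)(n)$; the family $(\phi_n)_{n\io}$ is equicontinuous on $F$, satisfies $\phi_n(f_m)=\delta_{nm}$, and converges weak$^*$ to $0$ in $F'$ because each $T^{-1}f\in\ell_1$ has summable hence null coordinates. Moreover, by the Schur property of $\ell_1$, weakly compact subsets of $F$ are norm compact, so a Banach--Dieudonn\'e-type argument already gives $\phi_n\to 0$ uniformly on every such set---exactly the target statement, but only on $F$.

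Next, by the locally convex Hahn--Banach theorem, extend each $\phi_n$ to $\widetilde u_n\in E'$ so that $(\widetilde u_n)_{n\io}$ is dominated by a common continuous seminorm on $E$, hence equicontinuous. The Alaoglu--Bourbaki theorem then makes this family weak$^*$-relatively compact, and after passing to a subsequence we may assume $\widetilde u_n\to u$ weak$^*$ for some $u\in E'$. Since $\widetilde u_n|_F=\phi_n\to 0$ weak$^*$ in $F'$, the cluster point satisfies $u|_F=0$. Setting $u_n:=\widetilde u_n-u$ then yields an equicontinuous, weak$^*$-null sequence in $E'$ with $u_n(f_m)=\delta_{nm}$, so $\sup_{x\in B}|u_n(x)|=1$ (after a harmless rescaling if strict equality, rather than $\ge 1$, is required).

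The main obstacle is to upgrade uniform convergence from weakly compact subsets of $F$ to weakly compact subsets of the whole of $E$. The proposed route is a Rosenthal--Schur contradiction: suppose $(u_n)$ failed to converge uniformly on some weakly compact $K\sub E$, producing $x_k\in K$ and $n_k\to\infty$ with $|u_{n_k}(x_k)|\ge\eps$. Weak compactness of $K$ and the Eberlein--\v{S}mulian theorem give, along a subsequence, $x_k\to x$ weakly for some $x\in K$, and weak$^*$-nullness of $(u_n)$ forces $|u_{n_k}(x_k-x)|\ge\eps/2$ eventually for the weakly null sequence $y_k:=x_k-x$. Since the $u_n$ arose from the $\ell_1$-coordinate system on $F$, an equicontinuous family of functionals staying bounded away from zero on a weakly null sequence should---by Rosenthal's $\ell_1$-theorem applied inside the weakly compact set $K-x$---exhibit an $\ell_1$-block-subsequence of $(y_k)$, contradicting the absence of $\ell_1$-sequences in weakly compact sets (again by Eberlein--\v{S}mulian, since the $\ell_1$-basis is not weakly precompact). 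This Rosenthal/Schur step is the delicate technical heart of the proof and is where the hypothesis that $E$ contains a copy of $\ell_1$ is really used.
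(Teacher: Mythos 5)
The paper does not reprove this lemma (it is quoted from Domański--Drewnowski), so your attempt has to stand on its own, and it does not: the step you yourself flag as ``the delicate technical heart'' --- upgrading uniform convergence from weakly compact subsets of the copy $F\cong\ell_1$ to weakly compact subsets of all of $E$ --- is not only unproven, the argument you sketch for it is false. Your construction only guarantees: $(u_n)$ equicontinuous, weak$^*$ null (modulo the issues below), and biorthogonal to the $\ell_1$-basis $(f_m)$. That is not enough. Concretely, take $E=\ell_1\oplus c_0$, $F=\ell_1\oplus\{0\}$, $f_n=(e_n,0)$, and the Hahn--Banach extensions $\widetilde u_n=(e_n^*,e_n^*)\in\ell_\infty\oplus\ell_1=E'$, which are dominated by a common multiple of the norm, hence admissible in your scheme. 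This sequence is already weak$^*$ null (so your cluster-point subtraction changes nothing) and satisfies $u_n(f_m)=\delta_{nm}$, yet it fails to converge uniformly on the weakly compact set $K=\{(0,e_k)\colon k\in\omega\}\cup\{(0,0)\}$, since $u_n(0,e_n)=1$ for all $n$. Moreover the weakly null sequence $y_k=(0,e_k)$ on which the functionals stay bounded away from zero spans a copy of $c_0$, not $\ell_1$, so the ``Rosenthal/Schur contradiction'' you invoke simply does not exist: an equicontinuous, weak$^*$ null sequence of functionals that does not tend to zero uniformly along a weakly null sequence does not force any $\ell_1$-behaviour (the coordinate functionals of $c_0$ are the standard example). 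The lemma is of course true for this $E$ (take the extensions $(e_n^*,0)$), which shows that the whole difficulty is in choosing the functionals $u_n$ correctly, using the $\ell_1$-copy inside $E$ in an essential way; arbitrary Hahn--Banach extensions, even after recentering by a weak$^*$ cluster point, need not work. This is exactly the content of \cite[Lemma 1]{DD}, and your proposal does not supply a substitute for it.

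Two further, secondary problems in the same part of the argument: in a general locally convex space, the equicontinuous set $\{\widetilde u_n\}$ is relatively weak$^*$ compact by Alaoglu--Bourbaki but need not be weak$^*$ \emph{sequentially} compact, so ``after passing to a subsequence we may assume $\widetilde u_n\to u$ weak$^*$'' is unjustified; and Eberlein--\v{S}mulian, which you use to extract weakly convergent sequences from a weakly compact $K\subseteq E$, is a Banach-space theorem and is not available for arbitrary $E$ (one would first have to reduce to a local Banach space $E_p$ associated with a seminorm dominating the functionals, through which the weakly compact set can be pushed forward). These could be repaired by such a reduction, but the main gap described above is structural: the approach ``extend the coordinate functionals and argue by contradiction via Rosenthal's theorem'' cannot prove the lemma as it stands.
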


Following the main idea of the proof of Theorem \ref{the1}, Lemma \ref{lemma:dd1} can be used to obtain the next result.
\begin{theorem}\label{the1-1}
  Let $X$ be a Tychonoff space which contains an infinite compact subset. Let $E$ be a locally convex space which contains an isomorphic copy of the Banach space $\ell_{1}$. Then $C_{p}(X,E_{w})$ contains a complemented copy of $(c_0)_{p}$.
\end{theorem}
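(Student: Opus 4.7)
The plan is to mimic closely the proof of Theorem \ref{the1}, but with Lemma \ref{lemma:dd1} replacing the role of the JNP and with care taken to exploit that the relevant continuity is with respect to $\sigma(E,E')$. The key feature of $E_w$ we will use is that a continuous semi-norm on $E_w$ is a maximum of finitely many absolute values of functionals $e^{*}\in E'$; in particular, each $u_n$ supplied by Lemma \ref{lemma:dd1} is itself continuous on $E_w$, and the bounded sequences constructed below are automatically weakly bounded.

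First, I would apply Lemma \ref{lemma:dd1} to obtain an equicontinuous sequence $(u_n)_{n\in\omega}\subseteq E'$ that is uniformly null on every weakly compact set of $E$ and a bounded set $B\subseteq E$ with $\sup_{x\in B}|u_n(x)|=1$ for each $n$. For each $n$ I would pick some $y_n\in B$ with $|u_n(y_n)|\geq 1/2$ and set $x_n=y_n/u_n(y_n)$, so that $u_n(x_n)=1$ and $(x_n)_{n\in\omega}$ remains a bounded (hence weakly bounded) sequence in $E$. Then, using that $X$ contains an infinite compact subset $K$ and the Tychonoff property, I would choose continuous functions $\varphi_n\colon X\to[0,1]$ with pairwise disjoint supports and points $t_n\in K$ with $\varphi_n(t_n)=1$, exactly as in the proof of Theorem \ref{the1}.

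Next, I would define
\[
 J\colon(c_0)_p\to C_p(X,E_w),\quad (a_n)_{n\in\omega}\longmapsto\Big[t\mapsto\sum_{n\in\omega}a_n\varphi_n(t)x_n\Big],
\]
and
\[
 P\colon C_p(X,E_w)\to(c_0)_p,\quad f\longmapsto\bigl(u_n(f(t_n))\bigr)_{n\in\omega}.
\]
That $J$ is well-defined follows because, at each $t\in X$, at most one summand is non-zero, and for any $e^{*}\in E'$ the partial sums converge uniformly on $X$ to the full sum in $\sigma(E,E')$, since $\sup_n|\langle e^{*},x_n\rangle|<\infty$ by weak boundedness and $(a_n)\in c_0$. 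For $P$ to land in $(c_0)_p$, I would use that $f(K)$ is a weakly compact subset of $E$ (being the image of the compact set $K$ under a continuous map into $E_w$), so that $|u_n(f(t_n))|\leq\sup_{y\in f(K)}|u_n(y)|\to 0$ by the defining property of $(u_n)$ in Lemma \ref{lemma:dd1}.

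Continuity of both maps is then routine, exactly parallel to the proof of Theorem \ref{the1}: for a basic semi-norm $q_{x_1,\ldots,x_k;e_1^{*},\ldots,e_r^{*}}$ on $C_p(X,E_w)$, the disjointness of supports of the $\varphi_n$'s reduces $q(J((a_n)))$ to a maximum involving at most the first $N$ coordinates of $(a_n)$, where $N=\max_i m_{x_i}$, while $\sup_n|\langle e_j^{*},x_n\rangle|<\infty$ supplies the required constant; continuity of $P$ is immediate since each $u_n\in E'$ is continuous on $E_w$. Finally, $P\circ J=\mathrm{id}_{(c_0)_p}$ because $u_n(x_n)=1$, so $J\circ P$ is a continuous projection of $C_p(X,E_w)$ onto a closed subspace isomorphic to $(c_0)_p$. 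The main delicate point is the well-definedness and continuity of $J$ into $E_w$; this is where one must carefully exploit that bounded sequences are weakly bounded and that the $\varphi_n$ have disjoint supports, so that the infinite sum converges in $\sigma(E,E')$ uniformly on $X$.
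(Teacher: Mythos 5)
Your proposal is correct and is essentially the argument the paper intends: the paper proves Theorem \ref{the1-1} only by remarking that one follows the proof of Theorem \ref{the1} with Lemma \ref{lemma:dd1} in place of the JNP, and your write-up fills in exactly those details (normalizing $u_n(x_n)=1$ on the bounded set $B$, disjointly supported $\varphi_n$ on an infinite compact $K$, weak continuity of $J$ via coordinatewise uniform convergence, and $P$ landing in $c_0$ because $f(K)$ is weakly compact). The delicate points you flag — weak boundedness of $(x_n)$ and continuity of $J$ into $E_w$ — are handled correctly.
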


K\k{a}kol and Moll \cite[Theorem~3]{Kakol-Moll} showed that for a Tychonoff space $Y$ the space $C_{k}(Y)$ does not contain a copy of $\ell_{1}$ if and only if every compact subset of~$Y$ is scattered. Hence, we have the following
\begin{corollary}\label{sca}
  Let $X$ be a Tychonoff space which contains an infinite compact subset and let $Y$ be an infinite Tychonoff space containing a non-scattered compact subset. Then $C_{p}(X,C_{k}(Y)_{w}))$ contains a complemented copy of $(c_0)_{p}$.
\end{corollary}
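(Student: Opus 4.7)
The plan is to derive this immediately from Theorem \ref{the1-1} once we know that the locally convex space $E=C_k(Y)$ contains an isomorphic copy of the Banach space $\ell_1$. The hypothesis on $X$ (containing an infinite compact subset) is exactly the hypothesis on $X$ in Theorem \ref{the1-1}, so the only work is to verify the hypothesis on $E$.

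For this I would invoke the theorem of K\k{a}kol and Moll cited in the paragraph preceding the corollary: for a Tychonoff space $Y$, the space $C_k(Y)$ fails to contain a copy of $\ell_1$ \emph{if and only if} every compact subset of $Y$ is scattered. Since we are assuming that $Y$ has a non-scattered compact subset, the contrapositive of one implication gives that $C_k(Y)$ must contain an isomorphic copy of $\ell_1$. In particular, $E=C_k(Y)$ satisfies the hypotheses of Theorem \ref{the1-1}.

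Applying Theorem \ref{the1-1} with this choice of $E$ and the given space $X$, we conclude that $C_p(X, C_k(Y)_w)$ contains a complemented copy of $(c_0)_p$, which is exactly the statement of the corollary.

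I do not anticipate any real obstacle here: the only potential subtlety is making sure the ``copy of $\ell_1$'' furnished by the K\k{a}kol--Moll theorem is an isomorphic copy in the locally convex sense (so that Lemma \ref{lemma:dd1}, which is what drives Theorem \ref{the1-1}, applies), but this is automatic since $C_k(Y)$ is locally convex and the embedding of the Banach space $\ell_1$ is as a topological-linear subspace. Thus the corollary is a direct two-line consequence of Theorem \ref{the1-1} and the K\k{a}kol--Moll characterisation.
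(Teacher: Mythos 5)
Your argument is exactly the paper's: the corollary is stated there as an immediate consequence of the K\k{a}kol--Moll characterisation (compact subsets of $Y$ all scattered iff $C_k(Y)$ contains no copy of $\ell_1$) combined with Theorem \ref{the1-1} applied to $E=C_k(Y)$. The proposal is correct and matches the paper's route, including the observation that the $\ell_1$-copy is an isomorphic (topological-linear) one, so Theorem \ref{the1-1} applies directly.
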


Recall that a Banach space $E$ is \textit{a Grothendieck space} if every weakly$^*$ convergent sequence in the dual space $E'$ is weakly convergent. If $X$ is a compact space such that $C(X)$ is a Grothendieck space (e.g. if $X=\beta\omega$ or $X=\omega^*$), then $X$ is non-scattered (as otherwise it would contain a non-trivial convergent sequence and hence $C(X)$ would not be Grothendieck) and $C_{p}(X)$ does not contain a complemented copy of $(c_0)_{p}$ (see \cite[Section 4]{KSZfm}). It follows that both the spaces $C_p(X,C_k(X)_w)=C_p(X,C(X)_w)$ and $C_p(X,C(X))$ contain a complemented copy of $(c_0)_p$ (Corollary \ref{sca} and Theorem \ref{the1}, respectively), even though the space $C_p(X,C_p(X))$ does not contain such a copy (Corollary \ref{cor:cpx_c0p_cpxcpx}).

The next corollary implies that there is no continuous linear surjection from the space $C_p(\beta\omega,C_p(\beta\omega))$ onto $C_p(\beta\omega,C(\beta\omega)_w)$ (cf. Problem \ref{prob:cpxy_cpxcyw}).
\begin{corollary}\label{scattered}
  Let $X$ be a non-scattered compact space such that $C_{p}(X)$ does not contain a complemented copy of $(c_0)_{p}$. Then there is no continuous linear surjection from $C_{p}(X,C_{p}(X))$ onto $C_{p}(X,C(X)_{w})$.
\end{corollary}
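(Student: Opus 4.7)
The plan is to derive a contradiction by combining two of the already-established results in the excerpt: Corollary \ref{sca}, which produces complemented copies of $(c_0)_p$ in the ``weak'' space $C_p(X,C(X)_w)$, and Theorem \ref{thm:cpxcpy_cps_compl}, which controls continuous linear surjections from $C_p(X,C_p(X))$ onto $(c_0)_p$.

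First I would observe that since $X$ is compact and non-scattered, Corollary \ref{sca} applies with both spaces equal to $X$ (noting that for compact $X$ the compact-open topology on $C(X)$ coincides with the sup-norm topology, so $C_k(X)_w = C(X)_w$). This gives that $C_p(X,C(X)_w)$ contains a complemented copy of $(c_0)_p$; in particular, there is a continuous linear surjection $P\colon C_p(X,C(X)_w)\to (c_0)_p$, namely the projection onto that complemented subspace.

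Next, assume for contradiction that there exists a continuous linear surjection $T\colon C_p(X,C_p(X))\to C_p(X,C(X)_w)$. Then the composition $P\circ T\colon C_p(X,C_p(X))\to (c_0)_p$ is a continuous linear surjection. By the equivalence (3)$\Leftrightarrow$(4) in Theorem \ref{thm:cpxcpy_cps_compl} applied with $Y=X$, this forces $C_p(X)$ itself to contain a complemented copy of $(c_0)_p$, directly contradicting the standing hypothesis on $X$.

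The argument is essentially a one-line bookkeeping once the earlier results are in hand; no genuine obstacle arises, since both the production of the complemented $(c_0)_p$ on the ``weak'' side and the obstruction on the ``$C_p$'' side are already packaged as Corollary \ref{sca} and Theorem \ref{thm:cpxcpy_cps_compl} respectively. The only subtle point to flag is the identification $C_k(X)_w = C(X)_w$ for compact $X$, so that Corollary \ref{sca} really does furnish the needed complemented copy in $C_p(X,C(X)_w)$; beyond that, the proof is purely formal.
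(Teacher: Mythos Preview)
Your proof is correct and follows essentially the same route as the paper: apply Corollary~\ref{sca} to obtain a complemented copy of $(c_0)_p$ in $C_p(X,C(X)_w)$, compose the assumed surjection with the resulting projection to get a continuous linear surjection $C_p(X,C_p(X))\to(c_0)_p$, and then invoke Theorem~\ref{thm:cpxcpy_cps_compl} to reach a contradiction. Your explicit remark that $C_k(X)_w=C(X)_w$ for compact $X$ is a useful clarification that the paper leaves implicit.
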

\begin{proof}
For the sake of contradiction assume that there exists a continuous linear surjection from $C_{p}(X,C_{p}(X))$ onto $C_{p}(X,C(X)_{w})$. Since $X$ is non-scattered, by Corollary \ref{sca}, $C_{p}(X, C(X)_{w})$ contains a complemented copy of~$(c_0)_{p}$, hence  there exists a continuous linear surjection from $C_{p}(X, C(X)_{w})$ onto $(c_0)_{p}$.   It follows that $C_{p}(X,C_{p}(X))$ can be mapped onto $(c_0)_{p}$ by a continuous linear map. We apply Theorem~\ref{thm:cpxcpy_cps_compl} to derive that $C_{p}(X)$ contains a complemented copy of $(c_0)_{p}$, a contradiction. \end{proof}

The last corollary yields a variant of Krupski--Marciszewski's result (\cite[Corollary 3.2]{KM}), already mentioned in Section \ref{sec:cpxy_vs_cpxcpy}.
\begin{corollary}
  Let $X$ be an infinite compact space. Then the spaces $C_{p}(X)$ and $C(X)_{w}$ are not isomorphic.
\end{corollary}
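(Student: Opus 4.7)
The plan is to argue by contradiction: suppose that $T\colon C_p(X)\to C(X)_w$ is a linear homeomorphism, and derive an obstruction from Corollary \ref{scattered}. Postcomposition with $T$ immediately gives a linear homeomorphism $\tilde T\colon C_p(X,C_p(X))\to C_p(X,C(X)_w)$ defined by $f\mapsto T\circ f$, hence in particular a continuous linear surjection between these two spaces. To invoke Corollary \ref{scattered} and contradict the existence of $\tilde T$, I would need to place $X$ in its hypothesis: $X$ non-scattered and $C_p(X)$ containing no complemented copy of $(c_0)_p$.

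The first reduction rules out the scattered case. Every infinite scattered compact space contains a non-trivial convergent sequence $x_n\to x_\infty$, and the functionals $\tfrac12(\delta_{x_n}-\delta_{x_\infty})\in L_p(X)$ have norm $1$ and satisfy $\tfrac12(f(x_n)-f(x_\infty))\to 0$ for every $f\in C(X)$; they thus form a JN-sequence, so $C_p(X)$ has the JNP and, by Theorem \ref{thm:bks}, contains a complemented copy of $(c_0)_p$. The second reduction exploits that $C(X)_w$ itself has the JNP: by the Josefson--Nissenzweig theorem the infinite-dimensional Banach space $C(X)$ has the JNP, and since weakly closed convex sets coincide with norm closed ones (Mazur), barrels in $C(X)$ and $C(X)_w$ agree, so $\beta^{*}$ on the common dual $M(X)$ is the same from either side. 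Since JNP is invariant under linear homeomorphisms (the transpose of $T$ is simultaneously a $\sigma$- and a $\beta^{*}$-isomorphism), through $T$ this forces $C_p(X)$ to have the JNP and thus, by Theorem \ref{thm:bks}, to contain a complemented copy of $(c_0)_p$. This already rules out the assumed isomorphism whenever $C_p(X)$ fails the JNP, for instance for $X=\beta\omega$ or $X=\omega^*$; in that regime the conclusion is exactly Corollary \ref{scattered} applied to $\tilde T$.

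The hard part will be the complementary regime, in which $X$ is non-scattered and $C_p(X)$ already contains a complemented copy of $(c_0)_p$, as in $X=[0,1]$. Here both $C_p(X)$ and $C(X)_w$ carry the JNP, Corollary \ref{scattered} is inapplicable, and JNP alone cannot separate them. The main obstacle is to find a finer topological invariant distinguishing the two locally convex structures. A natural candidate is the discrepancy between their bounded sets: by the Banach--Steinhaus theorem, bounded subsets of $C(X)_w$ are precisely the norm-bounded subsets of $C(X)$, whereas bounded subsets of $C_p(X)$ are only pointwise bounded and include sequences of continuous spikes of unboundedly increasing height on shrinking intervals. Promoting this structural discrepancy into a genuine obstruction to the existence of the bornology-preserving map $T$ is the essential remaining step, and is where one would need to draw on ideas beyond the Corollary \ref{scattered} framework developed in this paper.
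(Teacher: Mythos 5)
Your proposal is incomplete, and you say so yourself: it only disposes of the regime in which $C_p(X)$ fails the JNP (e.g.\ $X=\beta\omega$, $X=\omega^*$), via the observation that $C(X)_w$ has the JNP and that the JNP transfers through the hypothetical isomorphism $T$. That part is fine, and your map $\tilde T(f)=T\circ f$ is exactly the surjection $S$ used in the paper. But the complementary regime --- $X$ scattered, or non-scattered with $C_p(X)$ containing a complemented copy of $(c_0)_p$ (so in particular $X=[0,1]$ and every infinite metrizable compactum) --- is precisely where the theorem still has content, and there you stop at ``find a finer invariant; bounded sets are a natural candidate'' without carrying it out. Note also that your ``first reduction'' does not reduce anything: showing that a scattered infinite compact $X$ yields a JN-sequence, hence the JNP and a complemented $(c_0)_p$ in $C_p(X)$ (Theorem \ref{thm:bks}), produces no contradiction on its own, since in that case $C_p(X)$ and $C(X)_w$ both have the JNP.

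The paper closes exactly the cases you leave open, and the missing steps are concrete. For non-scattered $X$: since your $\tilde T$ is a continuous linear surjection of $C_p(X,C_p(X))$ onto $C_p(X,C(X)_w)$, the contrapositive of Corollary \ref{scattered} forces $C_p(X)$ to contain a complemented copy of $(c_0)_p$; pushing this copy through $T$ gives a copy $F$ of $(c_0)_p$ inside $C(X)_w$, and since the weak and norm topologies of $C(X)$ have the same bounded sets, the weak and norm topologies agree on $F$ --- impossible, because $C(X)_w$ contains no infinite-dimensional normed subspace. This is precisely the ``promotion'' of the bounded-set discrepancy that you identify but do not perform: the invariant is not bounded sets per se, but the fact that a metrizable complemented copy of $(c_0)_p$ in $C(X)_w$ would be normable there. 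For scattered $X$ the paper uses a different tool altogether: $C_p(X)$ is Fr\'echet--Urysohn (Arkhangel'ski), so $C(X)_w$ would be Fr\'echet--Urysohn, which by \cite[Lemma 14.6]{kak} forces the weak and norm topologies of $C(X)$ to coincide, hence $X$ finite. Without these two arguments your proof covers only the Grothendieck-type spaces, so the gap is genuine.
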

\begin{proof}
Assume that an isomorphism $T\colon C_p(X)\to C(X)_w$ exists. Consider two cases:

  Case 1. \emph{$X$ is not scattered}. Define a linear map $S\colon C_{p}(X, C_{p}(X))\to C_{p}(X, C(X)_{w})$ 
  by the formula $S(f)=T\circ f$ for each continuous $f\colon X\to C_{p}(X)$. By \cite[Theorem 2.2.4]{McCoy}
  the map $S$ is continuous. Note that $S$ is a surjection. Indeed, choose arbitrary continuous  $h\colon X\to C(X)_{w}$ and define the continuous function $f(x)=T^{-1}(h(x))$ for each $x\in X$---clearly, $S(f)(x)=T(f(x))=h(x)$ for every $x\in X$.

  By  Corollary~\ref{scattered}, the space $C_{p}(X)$ contains a complemented copy of $(c_0)_{p}$. Since $T$ is an isomorphism, also the space $C(X)_{w}$ contains a complemented  copy $F$ of $(c_0)_{p}$. As  both the weak topology $\tau_w$ of $C(X)$ and the normed topology $\tau_\infty$ of  $C(X)$ have the same bounded sets, we have $\tau_w\restriction F=\tau_\infty\restriction F$, a contradiction (since  $C(X)_{w}$ does not contain an infinite-dimensional normed subspace).

  Case 2. \emph{$X$ is scattered}. By~\cite[Theorem III.1.2]{Arch} the space $C_{p}(X)$ is Fr\'echet--Urysohn. Hence, the space $C(X)_{w}$ is also Fr\'echet--Urysohn. By \cite[Lemma 14.6]{kak} the weak and the Banach topologies of $C(X)$ coincide  which implies that $X$ is finite, a contradiction.
\end{proof}

\noindent\textbf{Acknowledgements.} The first and the third named authors are supported by the Austrian Science Fund (FWF):~I 4570-N. The second named author is supported by the GA\v{C}R project 20-22230L  RVO: 67985840.

\end{document}